\newtheorem{theorem}{Theorem}
\newtheorem{corollary}[theorem]{Corollary}
\newtheorem{lemma}[theorem]{Lemma}
\newtheorem{example}[theorem]{Example}
\newtheorem{proposition}[theorem]{Proposition}
\numberwithin{equation}{section}
\newcommand{\Z}{\mathbb{Z}}
\newcommand{\mcal}{\mathcal}
\newcommand{\bigabs}[1]{\big\lvert#1\big\rvert}
\newcommand{\biggabs}[1]{\bigg\lvert#1\bigg\rvert}
\newcommand{\conj}[1]{\overline{#1}}
\newcommand{\floor}[1]{\lfloor#1\rfloor}
\newcommand{\om}{\omega}
\long\def\symbolfootnote[#1]#2{\begingroup%
\def\thefootnote{\fnsymbol{footnote}}\footnote[#1]{#2}\endgroup}
\begin{document}

\title{Three-phase Golay sequence and array triads}
\author{Aki Ayukawa Avis \and Jonathan Jedwab}
\date{12 October 2019}
\maketitle

\symbolfootnote[0]{
J.~Jedwab is with Department of Mathematics, 
Simon Fraser University, 8888 University Drive, Burnaby BC V5A 1S6, Canada.
\par
A.~Avis is with Department of Nutrition, Universit{\'e} de Montr{\'e}al,
2900 Boulevard {\'E}douard-Montpetit, Montr{\'e}al QB H3T 1J4, Canada.
\par
Both authors were supported by NSERC.
\par
Email: {\tt jed@sfu.ca}, {\tt aki.avis@umontreal.ca}
\par
}

\begin{abstract}
3-phase Golay sequence and array triads are a natural generalisation of 2-phase Golay sequence pairs, yet their study has until now been largely neglected. 
We present exhaustive search results for 3-phase Golay sequence triads for all lengths up to~$24$, showing that the existence pattern is much richer than that for 2-phase Golay sequence pairs.
We give an elementary proof that there is no 3-phase Golay sequence triad whose length is congruent to 4 modulo~6. 
We give two construction methods for 3-phase Golay array triads, and show how to project 3-phase Golay array triads to lower-dimensional Golay triads.
In this way we explain much of the existence pattern found by exhaustive search.
\end{abstract}

\maketitle

\section{Introduction}
\label{sec:intro}
Golay sequence pairs were introduced by Golay in 1951 to solve a problem in multislit spectrometry~\cite{golay-static} and have since been applied in various digital communcations schemes, including coded aperture imaging \cite{ohyama-honda-tsujiuchi}, optical time domain reflectometry~\cite{nazarathy}, power control for multicarrier wireless transmission~\cite{golay-reed-muller}, and medical ultrasound~\cite{nowicki}. 
They are related to Barker sequences \cite{barker-alternatives} and Reed-Muller codes \cite{golay-reed-muller}. 

We consider a \emph{length $s$ sequence} to be an $1$-dimensional matrix $\mcal{A}=(A_i)$ of complex-valued entries, indexed by an integer $i$, for which
\[
\mbox{$A_i = 0$ if either $i < 0$ or $i \ge s$}.
\]
The {\em aperiodic autocorrelation function\/} of $\mcal{A} = (A_i)$ is 
\[
C_{\mcal{A}}(u) = \sum_{i} A_i \conj{A_{i+u}} \quad \mbox{for integers $u$},
\]
where bar represents complex conjugation. 
A 2-phase Golay sequence pair comprises length~$s$ sequences $\mcal{A} = (A_i)$ and $\mcal{B} = (B_i)$ whose elements lie in the alphabet $\{1,-1\}$, satisfying
\begin{equation}
\big( C_{\mcal{A}} + C_{\mcal{B}} \big) (u) = 0 \quad \mbox{for all $u \ne 0$}.
\end{equation}
Golay sequence pairs have been constructed for all lengths $s$ of the form $2^a 10^b 26^c$, where $a,b,c$ are non-negative integers \cite{turyn-hadamard}, but no examples are known for other lengths.
In view of the wide range of potential applications for such sequences, the definition of Golay sequence pairs has been extended in various ways: from the 2-phase alphabet $\{1, -1\}$ to the $H$-phase alphabet (comprising the $H^{\rm th}$ roots of unity) for even~$H$ \cite{craigen-holzmann-kharaghani,golay-reed-muller,fiedler-jedwab-wiebe-6phase,paterson}, and to quadrature amplitude modulation constellations \cite{chong-venkataramani-tarokh,rossing-tarokh}; from sequence pairs to sequence sets of even size greater than~2 \cite{paterson,schmidt-complementary}; and from sequences to arrays of dimension 2 or greater \cite{dymond-phd,fiedler-jedwab-parker-multidim-golay,jedwab-parker-golay-arrays,ohyama-honda-tsujiuchi}. 

Nonetheless, there is a natural generalisation of 2-phase Golay sequence pairs that, with the exception of a brief study by Frank \cite{frank-polyphase} in 1980, appears to have been largely neglected: from the 2-phase alphabet $\{1,-1\}$ to the 3-phase alphabet $\{1,\om,\om^2\}$ where $\om = e^{2\pi i/3}$, and simultaneously from sequence pairs to sequence triads (sets of size~3). The 3-phase alphabet is attractive for applications, because the small number of phases allows the received signal levels to be easily distinguished at the receiver. The related question of the existence of 3-phase Barker arrays was considered in~\cite{three-phase-barker}.

We shall consider the following questions theoretically and computationally, summarising and extending results contained in the 2008 Master's thesis of the first author \cite{avis-masters}:
\begin{enumerate}
\item 
For which lengths $s$ do $3$-phase Golay sequence triads exist?
\item 
How many $3$-phase length~$s$ Golay sequence triads are there?
\item
How are $3$-phase Golay array triads related to $3$-phase Golay sequence triads?
\item
How can $3$-phase Golay sequence and array triads be constructed?
\end{enumerate}

Frank \cite{frank-polyphase} gave an example of a 3-phase Golay sequence triad of length~$3$ and one of length~$5$, and presented a construction for a 3-phase length~$3s$ Golay sequence triad from one of length~$s$. Some of the other constructions described in \cite{frank-polyphase} can be applied to 3-phase Golay sequence triads, but in general they produce sequences over larger alphabets such as 6-phase or 12-phase.

We shall show in \cref{thm:proj} that the projection to lower dimensions of a multi-dimensional 3-phase Golay array triad is also a 3-phase Golay triad, so that a higher-dimensional Golay array triad can be considered a more fundamental object than its lower-dimensional projections; this parallels the central argument of \cite{fiedler-jedwab-parker-multidim-golay} for Golay pairs.
Our main construction method for Golay array triads is \cref{thm:3x}, which modifies Frank's construction \cite{frank-polyphase} for Golay sequence triads to produce an $(r+1)$-dimensional Golay array triad from an $r$-dimensional Golay array triad. 

The rest of this paper is organised as follows.
In \cref{sec:sequences}, we establish structural properties of 3-phase Golay sequence triads, and in \cref{tab:sequence-counts} present counts of 3-phase length~$s$ Golay sequence triads obtained from exhaustive search for each $s \le 24$.
In \cref{sec:nonexistence}, motivated by computational evidence, we give an elementary proof that there is no 3-phase length~$s$ Golay sequence triad when $s \equiv 4 \pmod{6}$.
In \cref{sec:arrays}, we introduce 3-phase Golay array triads and show how they can be projected to 3-phase Golay array triads in one fewer dimension, deducing counts of 3-phase Golay array triads in \cref{tab:array-counts} from those for sequence triads given in \cref{tab:sequence-counts}.
In \cref{sec:constructions}, we present two constructions of 3-phase Golay array triads, and apply them to a small set of seed triads to explain the existence of many of the triads counted in Tables~\ref{tab:sequence-counts} and~\ref{tab:array-counts}.
In \cref{sec:open}, we pose some open questions motivated by our results.
In the Appendix, we explicitly list some of the Golay sequence and array triads whose existence we were not able to explain.

\section{Golay sequence triads}
\label{sec:sequences}
In this section, we establish structural properties of 3-phase Golay sequence triads and present exhaustive search results for each length up to~$24$.

Throughout, let $\om = e^{2\pi i/3}$.
A length~$s$ sequence $(A_i)$ is a \emph{$3$-phase sequence} if all elements of the set $\{A_i \mid 0 \le i < s\}$ take values in $\{1, \om, \om^2\}$.
An unordered set of three 3-phase length~$s$ sequences $\{\mcal{A}, \mcal{B}, \mcal{C}\}$ is a \emph{Golay sequence triad} if
\begin{equation}
\label{eqn:Golayseqtriad}
\big( C_\mcal{A} + C_\mcal{B} + C_\mcal{C} \big) (u) = 0 \quad \mbox{for all $u \ne 0$}.
\end{equation}
It is sufficient to verify \eqref{eqn:Golayseqtriad} for the integers $u$ satisfying $0 < u < s$, because every complex-valued sequence $\mcal{A}$ satisfies $C_{\mcal{A}}(-u) = \conj{C_{\mcal{A}}(u)}$ for all integers~$u$.

A 3-phase length~$s$ sequence $(A_i)$ corresponds to a length~$s$ sequence $(a_i)$ over~$\Z_3$, where 
\[
\mbox{$A_i = \om^{a_i}$ for each $i$ satisfying $0 \le i < s$}.
\]
The aperiodic autocorrelation function of a sequence over~$\Z_3$ is that of the corresponding $3$-phase sequence.

\begin{example}
\label{ex:seqtriad}
The sequences 
\[
\begin{array}{ccccccc}
\mcal{A} = (A_i) = \big [ \hspace{2mm} 1 & \om^2 & 1     & 1     & \om^2 & 1     \hspace{2mm} \big ], \\[1.5ex]
\mcal{B} = (B_i) = \big [ \hspace{2mm} 1 & \om   & \om^2 & \om^2 & \om^2 & \om   \hspace{2mm} \big ], \\[1.5ex]
\mcal{C} = (C_i) = \big [ \hspace{2mm} 1 & \om   & \om   & \om   & 1     & \om^2 \hspace{2mm} \big ],
\end{array}
\]
satisfy
\[
\begin{array}{cccccc}
\big( C_{\mcal{A}}(u) \mid 0 \leq u < 6 \big) = \big [ \hspace{2mm} 6 & -1     & 1     & 3       & -1     & 1     \hspace{2mm} \big ], \\[1.5ex]
\big( C_{\mcal{B}}(u) \mid 0 \leq u < 6 \big) = \big [ \hspace{2mm} 6 & -\om   & \om   & \om-1   & -\om^2 & \om^2 \hspace{2mm} \big ], \\[1.5ex]
\big( C_{\mcal{C}}(u) \mid 0 \leq u < 6 \big) = \big [ \hspace{2mm} 6 & -\om^2 & \om^2 & \om^2-1 & -\om   & \om   \hspace{2mm} \big ],
\end{array}
\]
\vspace{-0.5ex} \\
and therefore comprise a $3$-phase length~$6$ Golay sequence triad.
The corresponding sequences over~$\Z_3$ are
$(a_i) = \left [ \begin{array}{*{6}{c @{\hspace{1.7mm}} }} 0 & 2 & 0 & 0 & 2 & 0 \end{array} \right ]$,
$(b_i) = \left [ \begin{array}{*{6}{c @{\hspace{1.7mm}} }} 0 & 1 & 2 & 2 & 2 & 1 \end{array} \right ]$,
$(c_i) = \left [ \begin{array}{*{6}{c @{\hspace{1.7mm}} }} 0 & 1 & 1 & 1 & 0 & 2 \end{array} \right ]$.
\end{example}

We will use upper-case letters for 3-phase sequences (``multiplicative notation''), and lower-case letters for sequences over~$\Z_3$ (``additive notation'');
the same letter (for example $A$ and~$a$) will indicate corresponding sequences. We will switch between multiplicative and additive notation as convenient.

Suppose that $\{(a_i), (b_i), (c_i)\}$ is a length~$s$ Golay sequence triad over~$\Z_3$. Since the sequences $(x_i)$ and $(x_i+1)$ over~$\Z_3$ have identical aperiodic autocorrelation function, each of the $3^3$ unordered sets
$\big\{(a_i+\alpha), \, (b_i+\beta), \, (c_i+\gamma)\big\}$ 
is also a Golay sequence triad over~$\Z_3$, as $\alpha, \beta, \gamma$ range over $\{0,1,2\}$. We may therefore assume that
\[
(a_0, b_0, c_0) = (0, 0, 0),
\]
in which case the Golay sequence triad is in \emph{normalised form}.
Condition \eqref{eqn:Golayseqtriad} with $u=s-1$ then forces $\{a_{s-1},b_{s-1},c_{s-1}\} = \{0,1,2\}$, which implies there are exactly $3!$ ordered Golay sequence triads corresponding to each (unordered) Golay sequence triad.

The following result follows directly from the definition of aperiodic autocorrelation function and Golay sequence triad. 
\begin{lemma}
\label{lem:equiv-class-sequence-triad}
\mbox{}
\begin{enumerate}[(i)]

\item
{\rm (Linear Offsets).}
Suppose that $\{(a_i), (b_i), (c_i) \}$ is a length $s$ Golay sequence triad over~$\Z_3$. Then
\[
\big \{ (a_i + ei), \, (b_i + ei), \, (c_i + ei) \big \}
\]
is also a length~$s$ Golay sequence triad over~$\Z_3$ for each $e \in \Z_3$.

\item
{\rm (Reversal).}
Suppose that $\{(a_i), (b_i), (c_i) \}$ is a length $s$ Golay sequence triad over~$\Z_3$. Then
\[
\{(a_{s-1-i}), (b_{s-1-i}), (c_{s-1-i}) \}
\]
is also a length~$s$ Golay sequence triad over~$\Z_3$.

\item
{\rm (Reverse Conjugation).}
Let $(x_i)$ be a length $s$ sequence over~$\Z_3$.
Then the length~$s$ sequence
$(2x_{s-1-i})$ over $\Z_3$ has identical aperiodic autocorrelation function to~$(x_i)$.

\end{enumerate}
\end{lemma}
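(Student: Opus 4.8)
The plan is to treat all three parts as direct computations that track how the aperiodic autocorrelation function transforms under each operation, working in multiplicative notation where $A_i = \om^{a_i}$ so that $C_{\mcal{A}}(u) = \sum_i \om^{a_i - a_{i+u}}$. The common technique throughout is a reindexing of the summation over~$i$, so the only care required is bookkeeping of index shifts and complex conjugation; the statement that the results ``follow directly'' is justified by the fact that each operation multiplies the sum $C_{\mcal{A}} + C_{\mcal{B}} + C_{\mcal{C}}$ by a nonzero constant or conjugates it, and in either case preserves the vanishing condition \eqref{eqn:Golayseqtriad}.

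For part~(i), I would substitute $a_i \mapsto a_i + ei$ into the autocorrelation sum. The linear term contributes $ei - e(i+u) = -eu$ uniformly across the sum, so the exponent becomes $a_i - a_{i+u} - eu$ and the transformed autocorrelation factors as $\om^{-eu} C_{\mcal{A}}(u)$, and likewise for $\mcal{B}$ and $\mcal{C}$. Since the same scalar $\om^{-eu}$ multiplies all three, the sum $\big(C_{\mcal{A}} + C_{\mcal{B}} + C_{\mcal{C}}\big)(u)$ is scaled by $\om^{-eu}$ and hence still vanishes for every $u \ne 0$.

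For part~(ii), I would set $\tilde{a}_i = a_{s-1-i}$ and substitute $j = s-1-i-u$ in the sum $\sum_i \om^{\tilde{a}_i - \tilde{a}_{i+u}}$; as $i$ runs over all integers so does $j$, and the exponent $\tilde{a}_i - \tilde{a}_{i+u}$ becomes $a_{j+u} - a_j = -(a_j - a_{j+u})$. Thus the reversed sequence has autocorrelation $\conj{C_{\mcal{A}}(u)}$, and applying the same to $\mcal{B}$ and~$\mcal{C}$ shows that the transformed triad sum equals $\conj{\big(C_{\mcal{A}} + C_{\mcal{B}} + C_{\mcal{C}}\big)(u)}$, which vanishes exactly when the original does.

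For part~(iii), which concerns a single sequence, I would first observe that $2 \equiv -1 \pmod{3}$, so the sequence $(2x_{s-1-i})$ over~$\Z_3$ corresponds to the $3$-phase sequence $Y_i = \conj{X_{s-1-i}}$. Substituting $j = s-1-i-u$ into $C_{\mcal{Y}}(u) = \sum_i Y_i \conj{Y_{i+u}} = \sum_i \conj{X_{s-1-i}}\, X_{s-1-i-u}$ then gives $\sum_j X_j \conj{X_{j+u}} = C_{\mcal{X}}(u)$, establishing the claimed identity of autocorrelations. I expect no genuine obstacle here, since the definitions do all the work; the one point demanding attention is combining the reversal with the conjugation consistently in parts~(ii) and~(iii), and checking that the substitution $j = s-1-i-u$ is a bijection on~$\Z$ that leaves the finite support intact, so that no boundary terms are lost.
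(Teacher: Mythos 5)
Your proposal is correct and matches the paper's (implicit) argument: the paper gives no written proof, asserting only that the lemma ``follows directly from the definition of aperiodic autocorrelation function and Golay sequence triad,'' and your computations --- the uniform factor $\om^{-eu}$ for linear offsets, the conjugation $\conj{C_{\mcal{A}}(u)}$ under reversal, and the exact invariance under reverse conjugation via the substitution $j=s-1-i-u$ --- are precisely the routine verifications being alluded to. All three index manipulations and the handling of the zero-padding outside $[0,s)$ check out.
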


Suppose that $T$ is a length~$s$ Golay sequence triad over~$\Z_3$.
By \cref{lem:equiv-class-sequence-triad}~$(iii)$, when one sequence $(x_i)$ of $T$ is replaced by the length~$s$ sequence $(2x_{s-1-i})$, the resulting set is also a length~$s$ Golay sequence triad over~$\Z_3$. We say that the Golay sequence triads over~$\Z_3$ obtained from $T$ by applying one or more of the operations described in \cref{lem:equiv-class-sequence-triad}, and then taking the normalised form, are all \emph{equivalent} to~$T$. The size of the equivalence class of a normalised Golay sequence triad divides $3 \cdot 2 \cdot 2^3 = 48$. We take the representative of the equivalence class to be its lexicographically first member. For example, there are exactly three equivalence classes of length~5 Golay sequence triads over~$\Z_3$, each of size 24, and their equivalence class representatives are
\begin{align*}
&\big \{
\left [ \begin{array}{*{5}{c @{\hspace{1.7mm}} }} 0 & 0 & 0 & 1 & 0 \end{array} \right ],\, 
\left [ \begin{array}{*{5}{c @{\hspace{1.7mm}} }} 0 & 1 & 2 & 2 & 1 \end{array} \right ],\, 
\left [ \begin{array}{*{5}{c @{\hspace{1.7mm}} }} 0 & 0 & 2 & 1 & 2 \end{array} \right ]
\big \}, \\[1ex]
&\big \{
\left [ \begin{array}{*{5}{c @{\hspace{1.7mm}} }} 0 & 0 & 1 & 0 & 0 \end{array} \right ],\, 
\left [ \begin{array}{*{5}{c @{\hspace{1.7mm}} }} 0 & 0 & 1 & 2 & 1 \end{array} \right ],\, 
\left [ \begin{array}{*{5}{c @{\hspace{1.7mm}} }} 0 & 0 & 2 & 1 & 2 \end{array} \right ]
\big \}, \\[1ex]
&\big \{
\left [ \begin{array}{*{5}{c @{\hspace{1.7mm}} }} 0 & 0 & 1 & 1 & 0 \end{array} \right ],\, 
\left [ \begin{array}{*{5}{c @{\hspace{1.7mm}} }} 0 & 0 & 1 & 2 & 1 \end{array} \right ],\, 
\left [ \begin{array}{*{5}{c @{\hspace{1.7mm}} }} 0 & 0 & 2 & 0 & 2 \end{array} \right ]
\big \}.
\end{align*}

Golay \cite{golay-complementary} showed that the elements of a 2-phase length~$s$ Golay sequence pair $\{(A_i),\, (B_i)\}$ are related by
\[
A_iB_i = -A_{s-1-i}B_{s-1-i} \quad \mbox{for each $i$ satisfying $0 \le i < s$}.
\]
We next derive a counterpart relationship for the elements of a 3-phase Golay sequence triad. 

\begin{proposition}
\label{prop:sum}
Suppose that $\{(A_i), (B_i), (C_i) \}$ is a $3$-phase length~$s$ Golay sequence triad. Then
\[
A_iB_iC_i = A_{s-1-i}B_{s-1-i}C_{s-1-i} \quad \mbox{for each $i$ satisfying $0 \le i < s$}.
\]
\end{proposition}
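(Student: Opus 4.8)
The plan is to convert each scalar Golay equation $\big(C_\mcal{A}+C_\mcal{B}+C_\mcal{C}\big)(u)=0$ into a single congruence modulo~$3$ about the product sequence, and then to assemble these congruences by a telescoping argument. I would work in additive notation, writing $A_i=\om^{a_i}$, $B_i=\om^{b_i}$, $C_i=\om^{c_i}$ and setting $t_i=a_i+b_i+c_i$, so that the desired identity $A_iB_iC_i=A_{s-1-i}B_{s-1-i}C_{s-1-i}$ is equivalent to $t_i\equiv t_{s-1-i}\pmod 3$ for all~$i$.

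First I would fix $u$ with $0<u<s$ and expand the Golay condition as
\[
\sum_{i=0}^{s-1-u}\big(\om^{a_i-a_{i+u}}+\om^{b_i-b_{i+u}}+\om^{c_i-c_{i+u}}\big)=0.
\]
This is a vanishing sum of $3(s-u)$ cube roots of unity. The key observation is that if $N_0,N_1,N_2$ count the summands equal to $1,\om,\om^2$ respectively, then $N_0+N_1\om+N_2\om^2=0$, together with $\om^2=-1-\om$ and the linear independence of $1$ and~$\om$ over $\mathbb{Q}$, forces $N_0=N_1=N_2$. Hence the multiset of exponents, reduced modulo~$3$, contains each residue equally often, so the sum of all exponents is divisible by~$3$. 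In other words, writing $T_u:=\sum_{i=0}^{s-1-u}(t_i-t_{i+u})$, we obtain $T_u\equiv 0\pmod 3$ for every $u$ with $0<u<s$.

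Next I would simplify $T_u$ by telescoping: splitting the two sums and reindexing gives $T_u=\sum_{i=0}^{u-1}(t_i-t_{s-1-i})$, so with $d_i:=t_i-t_{s-1-i}$ the congruences become $\sum_{i=0}^{u-1}d_i\equiv 0\pmod 3$ for $u=1,\dots,s-1$. Taking successive differences of these partial sums yields $d_0\equiv d_1\equiv\cdots\equiv d_{s-2}\equiv 0\pmod 3$; and since $d_{s-1}=-d_0$ by the antisymmetry $d_{s-1-i}=-d_i$, in fact $d_i\equiv 0\pmod 3$ for every~$i$. This is exactly $t_i\equiv t_{s-1-i}\pmod 3$, which translates back to the claimed relation.

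I expect the main obstacle to be the passage from the complex identity to arithmetic modulo~$3$: one must recognise that a vanishing sum of cube roots of unity does not merely constrain the values loosely but forces exactly equal multiplicities $N_0=N_1=N_2$, and that this is precisely what pins down the sum of the phase exponents modulo~$3$. Once this equal-multiplicity fact is in hand, the remaining telescoping and induction are routine bookkeeping; the only care needed there is the boundary term $d_{s-1}$, which is handled by the antisymmetry of~$d$.
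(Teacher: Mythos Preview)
Your proof is correct and follows essentially the same approach as the paper: both arguments observe that a vanishing sum of $3(s-u)$ cube roots of unity forces equal multiplicities $N_0=N_1=N_2$, deduce from this that $\prod_{i=0}^{s-1-u}A_iB_iC_i=\prod_{i=u}^{s-1}A_iB_iC_i$ (your $T_u\equiv 0$ in additive form), and then recover the individual identities by comparing consecutive values of~$u$. Your additive presentation with the explicit telescoping $T_u=\sum_{i=0}^{u-1}d_i$ is a clean rephrasing of the paper's multiplicative induction on decreasing~$u$.
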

\begin{proof}
Let $u$ satisfy $0 < u < s$.
We are given that
\[
\sum_{i=0}^{s-1-u} \big ( A_i\conj{A_{i+u}} + B_i\conj{B_{i+u}} + C_i\conj{C_{i+u}} \big ) = 0,
\]
so the multiset of $3(s-u)$ summands above contains each of $1$, $\om$ and $\om^2$ exactly $s-u$ times. The product of these summands is therefore
\[
\prod_{i=0}^{s-1-u} (A_i\conj{A_{i+u}})(B_i\conj{B_{i+u}})(C_i\conj{C_{i+u}}) 
 = 1^{s-u} \om^{s-u} (\om^2)^{s-u} = 1,
\]
which implies that
\[
\prod_{i=0}^{s-1-u} A_i B_i C_i = \prod_{i=u}^{s-1} A_i B_i C_i.
\]
Since this holds for each $u$ satisfying $0 < u < s$, the required result follows by induction on decreasing $u \le s-1$. 
\end{proof}

The property of a length $s$ Golay sequence triad $\{(a_i),(b_i),(c_i)\}$ over $\Z_3$ corresponding to \cref{prop:sum} is
\begin{equation}
\label{eqn:sum}
a_i + b_i + c_i \equiv a_{s-1-i} + b_{s-1-i} + c_{s-1-i} \pmod{3} \quad \mbox{for each $i$ satisfying $0 \le i < s$}.
\end{equation}
We use this property to determine by exhaustive search the equivalence classes of length~$s$ Golay sequence triads $\{(a_i),(b_i),(c_i)\}$ over $\Z_3$ for each $s \le 24$. We begin by fixing the outermost elements of the sequences of the triad: the normalisation condition gives $(a_0,b_0,c_0)=(0,0,0)$, and we may then assume from the autocorrelation condition $u=s-1$ of~\eqref{eqn:Golayseqtriad} that $(a_{s-1},b_{s-1},c_{s-1}) = (0,1,2)$.
For each value of $i$ in $\{1, 2, \dots, \floor{\frac{s-1}{2}}\}$ taken in ascending order, we then recursively select all values of the outermost undetermined sequence elements $\{a_i, b_i, c_i, a_{s-1-i}, b_{s-1-i}, c_{s-1-i}\}$ that are consistent with both property \eqref{eqn:sum} and the autocorrelation condition $u=s-1-i$ of~\eqref{eqn:Golayseqtriad}. We finally collect the resulting Golay sequence triads into equivalence classes and retain only the representative of each equivalence class.
(Since just the representative of each equivalence class is required, the search space can be further reduced: for example, by taking the
leftmost nonzero entry of the subsequence $(a_0, a_1, \dots, a_i)$ to be~1, and discarding search branches where this subsequence occurs lexicographically after the reverse conjugated subsequence $(2a_{s-1}, 2a_{s-2}, \dots, 2a_{s-1-i})$.)
\cref{tab:sequence-counts} shows the resulting counts of equivalence classes of Golay sequence triads, and of normalised Golay sequence triads. 
The existence pattern for 3-phase Golay sequence triads appears to be much richer than that for 2-phase Golay sequence pairs: within the range $1 \le s \le 24$, a 2-phase length~$s$ Golay sequence pair exists only for $s \in \{2,4,8,10,16,20\}$ \cite{turyn-hadamard}, whereas a 3-phase length~$s$ Golay sequence triad exists for all $s \not \in \{4, 10, 16, 22\}$.

We call a sequence belonging to one or more $3$-phase Golay sequence triads (not necessarily in normalised form) a \emph{$3$-phase Golay sequence}.
For example, each of the sequences $\mcal{A}, \mcal{B}, \mcal{C}$ in \cref{ex:seqtriad} is a 3-phase length~6 Golay sequence.
The first author discussed the use of 3-phase Golay sequences in an orthogonal frequency division multiplexing (OFDM) multicarrier transmission scheme, using their favourable peak-to-mean envelope power ratio properties to formulate an alternative exhaustive search algorithm for Golay sequence triads \cite[Chap.~4]{avis-masters} to the one described above. Since the transmission rate of an OFDM scheme using 3-phase Golay sequences increases with the number of Golay sequences of a given length, \cref{tab:sequence-counts} also displays counts of Golay sequences.

\section{Nonexistence result for $s \equiv 4 \pmod{6}$}
\label{sec:nonexistence}
A striking feature of \cref{tab:sequence-counts} is that there are no 3-phase Golay sequence triads of length 4, 10, 16, and~22. In this section, we explain this observation by proving the following theorem.

\begin{theorem}
\label{thm:4mod6}
There is no $3$-phase length~$s$ Golay sequence triad when $s \equiv 4 \pmod{6}$.
\end{theorem}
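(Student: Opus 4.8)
The plan is to recast the Golay condition as a single generating-function identity and then extract arithmetic congruences by working in the Eisenstein integers $\Z[\om]$. Writing $A(z)=\sum_i A_iz^i$ for each sequence and $\widetilde{A}(z)=\sum_i\conj{A_i}\,z^{-i}$, condition~\eqref{eqn:Golayseqtriad} is equivalent to the Laurent-polynomial identity $A(z)\widetilde{A}(z)+B(z)\widetilde{B}(z)+C(z)\widetilde{C}(z)=3s$, since the coefficient of $z^{-u}$ on the left is exactly $(C_{\mcal A}+C_{\mcal B}+C_{\mcal C})(u)$; on the unit circle this reads $\abs{A(z)}^2+\abs{B(z)}^2+\abs{C(z)}^2=3s$. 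The two arithmetic features of the hypothesis $s\equiv4\pmod 6$ that I would exploit are that $s$ is even and that $s\equiv1\pmod 3$. The natural prime to track is $\lambda=1-\om$, which satisfies $N(\lambda)=3$ and generates the unique ramified prime of $\Z[\om]$ above $3$; recall also that every norm from $\Z[\om]$ is $\equiv0$ or $1\pmod3$ and is never $\equiv2\pmod4$.

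First I would probe the identity at $z=\pm1$, where it lands in $\Z[\om]$. At $z=1$ we get $\abs{A(1)}^2+\abs{B(1)}^2+\abs{C(1)}^2=3s$; since $\om\equiv1\pmod\lambda$ we have $A(1)\equiv\sum_i1=s\equiv1\pmod\lambda$, so $\lambda\nmid A(1)$ and hence $\abs{A(1)}^2\equiv1\pmod3$ for each sequence. At $z=-1$ we get $\abs{A(-1)}^2+\abs{B(-1)}^2+\abs{C(-1)}^2=3s$; because $s$ is even, $A(-1)\equiv\sum_i(-1)^i=0\pmod\lambda$, so $3\mid\abs{A(-1)}^2$ for each sequence. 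Since the same sequence is evaluated at both points, the shared congruence $A(1)\equiv A(-1)\pmod 2$ ties the two pictures together: $\abs{A(1)}^2$ and $\abs{A(-1)}^2$ are simultaneously odd or simultaneously divisible by $4$. Writing $\abs{A(-1)}^2=3t_{\mcal A}$ with $t_{\mcal A}$ a norm, the second identity becomes $t_{\mcal A}+t_{\mcal B}+t_{\mcal C}=s\equiv1\pmod3$, which (as each $t\equiv0$ or $1\pmod3$) forces exactly two of the three sequences to satisfy $3\mid A(-1)$, the third having $A(-1)$ exactly divisible by $\lambda$.

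The main obstacle is that these endpoint evaluations, taken by themselves, are not enough: they encode only the symbol-frequency identities $\sum_{\mathrm{seq}}(n_0^2+n_1^2+n_2^2)=s^2+2s$ and its $z=-1$ analogue, where $n_j$ counts the entries equal to $\om^j$, and a direct check shows these are already satisfiable for $s=4$. The proof must therefore engage the full identity, not just two of its values. The route I would pursue is to expand the left-hand side $\lambda$-adically, substituting $A_i=\om^{a_i}=(1-\lambda)^{a_i}$ and comparing coefficients modulo successive powers of $\lambda$: modulo $\lambda^2$ the only content is the order-$1$ term, which reproduces \cref{prop:sum} (the palindrome $a_i+b_i+c_i\equiv a_{s-1-i}+b_{s-1-i}+c_{s-1-i}\pmod3$), so the genuinely new information appears first at order~$2$, where the relation $\lambda^2\sim 3$ causes the quadratic data of the three sequences to couple. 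The hard part, where I expect the real work to lie, is to show that this order-$2$ constraint, combined with the evenness of $s$ (a $2$-adic input) and with $s\equiv1\pmod3$ (the $\lambda$-adic input isolated above), cannot be met; in effect one must bridge a parity obstruction and a modulo-$3$ obstruction, neither of which is visible to the other reduction on its own.
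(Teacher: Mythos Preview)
You have already found both halves of the paper's proof and simply failed to notice that they snap together. Your $z=-1$ computation \emph{is} the paper's \cref{prop:periodic}: writing $d_{\mcal A}=\sum_{j}a_{2j}-\sum_{j}a_{2j+1}\in\Z$, your own expansion $\om^{a}=(1-\lambda)^{a}\equiv 1-a\lambda\pmod{\lambda^{2}}$ gives $A(-1)\equiv -d_{\mcal A}\,\lambda\pmod{\lambda^{2}}$, so ``$3\mid A(-1)$'' (i.e.\ $\lambda^{2}\mid A(-1)$) is exactly $d_{\mcal A}\equiv 0\pmod 3$, which is the statement $\prod_{j}A_{2j}=\prod_{j}A_{2j+1}$. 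Your conclusion that exactly two of the three sequences satisfy $3\mid X(-1)$ therefore says that exactly one of $d_{\mcal A},d_{\mcal B},d_{\mcal C}$ is nonzero modulo~$3$.

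Now use \cref{prop:sum}, which you already cite as the order-$\lambda$ content of the identity. Because $s$ is even, the involution $i\mapsto s-1-i$ swaps parities, so summing the palindrome relation $a_i+b_i+c_i\equiv a_{s-1-i}+b_{s-1-i}+c_{s-1-i}\pmod 3$ over the even indices yields
\[
d_{\mcal A}+d_{\mcal B}+d_{\mcal C}\equiv 0\pmod 3.
\]
But if exactly one of the three $d$'s is nonzero mod~$3$, their sum is that single nonzero residue, not~$0$. Contradiction, and the theorem is proved. This is precisely the paper's argument (combine \cref{prop:sum} with \cref{prop:periodic}); there is no need for any order-$\lambda^{2}$ analysis, and the ``hard part'' you anticipate does not exist. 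The only genuine gap in your proposal is the one-line observation linking the two ingredients you already have.
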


\begin{table}[H]
\begin{center}
\vspace{2mm}
\begin{tabular}{|r|r|r|r|r|r|r|r|r|}
\hline
Sequence& \multicolumn{6}{|c|}{\# equivalence classes}			& \# normalised	& \# Golay	\\ \cline{2-7}
length 	& size   & size   & size     & size   & size	& Total		& sequence	& sequences	\\
	& 1      & 8      & 16       & 24     & 48 	&		& triads	&		\\ \hline 

2 	& 1      &        &         &         &         & 1		& 1 		& 9		\\
3 	& 1      & 1      &         &         &         & 2		& 9 		& 27		\\ 
4 	&        &        &         &         &         & 0		& 0 		& 0		\\
5 	&        &        &         & 3       &         & 3		& 72 		& 108		\\
6 	&        & 3      & 3       & 4       &         & 10		& 168 		& 288		\\ 
7 	&        &        &         & 8       & 9       & 17		& 624 		& 792		\\
8 	&        &        &         & 1       & 3       & 4		& 168 		& 306		\\
9 	&        & 15     & 33      & 25      & 32      & 105		& 2784 		& 3708		\\ 
10 	&        &        &         &         &         & 0		& 0 		& 0		\\ 
11 	&        &        &         & 14      & 50      & 64		& 2736 		& 4932		\\
12 	&        &        &         &         & 7       & 7		& 336 		& 756		\\ 
13 	&        &        &         & 16      & 48      & 64		& 2688		& 5850		\\ 
14 	&        &        &         & 23      & 68      & 91		& 3816 		& 8334		\\ 
15 	&        & 36     & 306     & 51      & 126     & 519		& 12456 	& 27576		\\ 
16 	&        &        &         &         &         & 0		& 0 		& 0		\\ 
17 	&        &        &         & 10      & 15      & 25		& 960	 	& 2160		\\ 
18 	&        & 84     & 714     & 184     & 503     & 1485		& 40656 	& 89532		\\ 
19 	&        &        &         & 6       & 11      & 17		& 672	 	& 1458		\\ 
20 	&        &        &         &         & 10      & 10		& 480	 	& 1080		\\
21 	&        & 84     & 2766    & 88      & 1007    & 3945		& 95376 	& 202932	\\ 
22 	&        &        &         &         &         & 0		& 0 		& 0		\\ 
23 	&        &        &         & 2       &         & 2		& 48 		& 108		\\ 
24 	&        & 12     & 750     & 16      & 247     & 1025		& 24336 	& 54756		\\ \hline
\end{tabular} 
\end{center}
\caption{Counts of $3$-phase length~$s$ Golay sequence triads, for each $s \le 24$}
\label{tab:sequence-counts}
\end{table}

Let $\mcal{A}=(A_i)$ be a length~$s$ sequence. The \emph{periodic autocorrelation function} of $\mcal{A}$ is
\[
R_\mcal{A}(u) = \sum_{i=0}^{s-1} A_i \conj{A_{(i+u) \bmod s}} \quad \mbox{for $0 \le u < s$},
\]
so that
\begin{equation}
R_\mcal{A}(u) = C_\mcal{A}(u) + \conj{C_\mcal{A}(s-u)} \quad \mbox{for $0 < u < s$}
\label{eqn:sum-aper}
\end{equation}
by the definition of aperiodic autocorrelation. An unordered set of three 3-phase length~$s$ sequences $\{\mcal{A}, \mcal{B}, \mcal{C}\}$ is a \emph{periodic Golay sequence triad} if
\[
\big( R_\mcal{A} + R_\mcal{B} + R_\mcal{C} \big) (u) = 0 \quad \mbox{for all $u$ satisfying $0 < u < s$}.
\]
 From \eqref{eqn:sum-aper}, the sequences of a Golay sequence triad also form a periodic Golay sequence triad of the same length.

Now we cannot prove \cref{thm:4mod6} by attempting to rule out the existence of a 3-phase periodic Golay sequence triad of length congruent to 4 modulo~6: for example, the sequence set
\[
\big \{
\left [
\begin{array}{*{4}{c @{\hspace{1.7mm}} }}
1 & 1 & \om & \om 
\end{array}
\right ],
\left [
\begin{array}{*{4}{c @{\hspace{1.7mm}} }}
1 & 1 & \om & \om
\end{array}
\right ],
\left [
\begin{array}{*{4}{c @{\hspace{1.7mm}} }}
1 & \om & 1 & \om
\end{array}
\right ]
\big \}.
\]
is a periodic length~4 Golay sequence triad. However, we can constrain the elements of a 3-phase periodic Golay sequence triad as follows, which in combination with \cref{prop:sum} will immediately prove \cref{thm:4mod6}.

\begin{proposition}
\label{prop:periodic}
Suppose that $\{(A_i), (B_i), (C_i)\}$ is a $3$-phase length~$2m$ periodic Golay sequence triad, where $m \equiv 2 \pmod{3}$. Then exactly two of the following three equations hold:
\[
\prod_{i=0}^{m-1}A_{2i} = \prod_{i=0}^{m-1}A_{2i+1}, \qquad
\prod_{i=0}^{m-1}B_{2i} = \prod_{i=0}^{m-1}B_{2i+1}, \qquad
\prod_{i=0}^{m-1}C_{2i} = \prod_{i=0}^{m-1}C_{2i+1}.
\]
\end{proposition}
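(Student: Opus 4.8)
The plan is to extract the relevant information from the periodic Golay condition by testing it against the ``alternating'' frequency, i.e. by attaching to each sequence $X\in\{A,B,C\}$ the single quantity $\widehat{X}=\sum_{i=0}^{2m-1}(-1)^iX_i\in\Z[\om]$ and controlling $\abs{\widehat{X}}^2$ modulo~$9$. Note that $\widehat{X}$ is exactly the data appearing in the three displayed equations in disguise: separating even and odd positions, $\widehat{X}=\sum_{i=0}^{m-1}X_{2i}-\sum_{i=0}^{m-1}X_{2i+1}$, so $\widehat{X}$ records the difference between the even-indexed and odd-indexed entries whose products are being compared.

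First I would establish the single global identity
\[
\abs{\widehat{A}}^2+\abs{\widehat{B}}^2+\abs{\widehat{C}}^2=6m.
\]
This comes directly from expanding $\abs{\widehat{X}}^2=\sum_{i,j}(-1)^{i+j}X_i\conj{X_j}$, writing $j\equiv i+u\pmod{2m}$ so that the sign collapses to $(-1)^{2i+u}=(-1)^u$, and recognising the inner sum over $i$ as the periodic autocorrelation $R_X(u)$. Summing over the triad gives $\sum_X\abs{\widehat{X}}^2=\sum_{u=0}^{2m-1}(-1)^u\big(R_\mcal{A}+R_\mcal{B}+R_\mcal{C}\big)(u)$; the periodic Golay condition annihilates every $u\neq0$ term and leaves the $u=0$ term $\big(R_\mcal{A}+R_\mcal{B}+R_\mcal{C}\big)(0)=6m$.

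The heart of the argument is a local (mod~$9$) computation of each $\abs{\widehat{X}}^2$. Writing $\widehat{X}=\alpha+\beta\om+\gamma\om^2$, where $\alpha,\beta,\gamma\in\Z$ are the signed counts (even positions minus odd positions) of the values $1,\om,\om^2$, one has $\alpha+\beta+\gamma=0$ because there are $m$ even and $m$ odd positions. Passing to the $\Z$-basis $\{1,\om\}$ via $\om^2=-1-\om$ gives $\widehat{X}=A+B\om$ with $A=\alpha-\gamma$ and $B=\beta-\gamma$, so $\abs{\widehat{X}}^2=A^2-AB+B^2$. Two observations then do all the work: (i) $A+B=-3\gamma\equiv0\pmod3$, which forces $A^2-AB+B^2\equiv3B^2\pmod9$; and (ii) the displayed equation for $X$ (equality of the even- and odd-position products) holds precisely when $\beta+2\gamma\equiv0\pmod3$, i.e. exactly when $B\equiv0\pmod3$. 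Combining these, $\abs{\widehat{X}}^2\equiv0\pmod9$ when the equation for $X$ holds, and $\abs{\widehat{X}}^2\equiv3\pmod9$ when it fails.

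Finally I would combine the two. Letting $f$ denote the number of the three equations that fail, the global identity reduces modulo~$9$ to $3f\equiv6m\pmod9$. Since $m\equiv2\pmod3$ we have $6m\equiv12\equiv3\pmod9$, so $f\equiv1\pmod3$, and as $0\le f\le3$ this forces $f=1$; hence exactly one equation fails and exactly two hold, as claimed. I expect the only delicate point to be step~(ii): correctly identifying which residue of the norm form $A^2-AB+B^2$ detects each product equation, and checking that the stronger congruence $A+B\equiv0\pmod3$ (not merely $B\equiv0$) is what upgrades the mod-$3$ information on $B$ into the sharp dichotomy $\abs{\widehat{X}}^2\in\{0,3\}\pmod9$ that the counting argument requires.
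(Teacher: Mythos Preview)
Your proposal is correct and follows essentially the same route as the paper: define the alternating sums $\widehat{X}$, use the periodic Golay condition to evaluate $\sum_X\abs{\widehat{X}}^2=6m\equiv3\pmod9$, and compute each $\abs{\widehat{X}}^2$ modulo~$9$ via the norm form $A^2-AB+B^2\equiv3B^2$ to obtain the $\{0,3\}$ dichotomy governed by the product condition. The paper packages the local mod-$9$ computation into a separate lemma (your steps (i)--(ii)) applied with $Y_i=X_{2i}$, $Z_i=X_{2i+1}$, but the substance and even the algebraic details are the same.
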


\begin{proof}
Let $\mcal{A} = (A_i)$, $\mcal{B} = (B_i)$, $\mcal{C} = (C_i)$.
We shall examine the value of $S \bmod 9$, where
\begin{equation}
S = \biggabs{\sum_{i=0}^{2m-1}(-1)^i A_i}^2 + \biggabs{\sum_{i=0}^{2m-1}(-1)^i B_i}^2 + \biggabs{\sum_{i=0}^{2m-1}(-1)^i C_i}^2.
\label{eqn:S-defn}
\end{equation}
For a length $2n$ sequence $\mcal{X}=(X_i)$, we have
\begin{align*}
\biggabs{\sum_{i=0}^{2n-1}(-1)^i X_i}^2 
 &= \sum_{i=0}^{2n-1} (-1)^i X_i \sum_{j=0}^{2n-1} (-1)^j \conj{X_j} \\
 &= \sum_{i=0}^{2n-1} (-1)^i X_i \sum_{u=0}^{2n-1} (-1)^{i+u} \conj{X_{(i+u)\bmod 2n}} 
\intertext{using the re-indexing $j = (i+u)\bmod{2n}$, so that}
\biggabs{\sum_{i=0}^{2n-1}(-1)^i X_i}^2 
 &= \sum_{u=0}^{2n-1} (-1)^u R_\mcal{X}(u).
\end{align*}
It follows from \eqref{eqn:S-defn} that
\begin{align}
S &= \sum_{u=0}^{2m-1} (-1)^u \big( R_\mcal{A} + R_\mcal{B} + R_\mcal{C} \big)(u) \nonumber \\
  &= \big( R_\mcal{A} + R_\mcal{B} + R_\mcal{C} \big) (0) \nonumber
\intertext{because $\{\mcal{A}, \mcal{B}, \mcal{C}\}$ is a periodic Golay triad.
Since $\big( R_\mcal{A} + R_\mcal{B} + R_\mcal{C}\big) (0) = 6m$ and $m \equiv 2 \pmod{3}$, this implies that}
S &\equiv 3 \pmod{9} \label{eqn:Smod9}.
\end{align}

Now for a 3-phase length~$2n$ sequence $\mcal{X}=(X_i)$, by taking $Y_i = X_{2i}$ and $Z_i = X_{2i+1}$ in \cref{lem:diff-roots} given below, we find that $\bigabs{\sum_{i=0}^{2n-1}(-1)^i X_i}^2$ is an integer satisfying the congruence
\[
\biggabs{\sum_{i=0}^{2n-1}(-1)^i X_i}^2 \equiv
  \begin{cases}
    0 \pmod{9} & \mbox{if $\prod_{i=0}^{n-1} X_{2i} = \prod_{i=0}^{n-1} X_{2i+1}$}, \\
    3 \pmod{9} & \mbox{otherwise}.
  \end{cases}
\]
The result then follows from \eqref{eqn:S-defn} and~\eqref{eqn:Smod9}.
\end{proof}

\begin{lemma}
\label{lem:diff-roots}
Let $(Y_i)$ and $(Z_i)$ be $3$-phase length $n$ sequences. Then $\bigabs{\sum_{i=0}^{n-1} (Y_i - Z_i)}^2$ is an integer satisfying the congruence
\[
\biggabs{\sum_{i=0}^{n-1} (Y_i - Z_i)}^2 \equiv 
  \begin{cases}
    0 \pmod{9} & \mbox{if $\prod_{i=0}^{n-1} Y_i = \prod_{i=0}^{n-1} Z_i$}, \\
    3 \pmod{9} & \mbox{otherwise}.
  \end{cases}
\]
\end{lemma}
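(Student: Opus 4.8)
The plan is to work in the ring of Eisenstein integers $\Z[\om]$ and to track the quantity $W = \sum_{i=0}^{n-1}(Y_i - Z_i)$ modulo~$3$, since $\bigabs{\sum_{i=0}^{n-1}(Y_i-Z_i)}^2 = \abs{W}^2 = W\conj{W}$. First I would record that every element of $\{1,\om,\om^2\}$ can be written in the basis $\{1,\om\}$ using $\om^2 = -1-\om$, so $W = A + B\om$ for some integers $A,B$; then $\abs{W}^2 = A^2 - AB + B^2$ (using $\om + \conj{\om} = -1$ and $\om\conj{\om}=1$), which is manifestly an integer. The whole problem thus reduces to computing the integer quadratic form $A^2 - AB + B^2$ modulo~$9$ in terms of the residues of $A$ and $B$.

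The key linearisation step is the congruence $\om^k \equiv 1 + k(\om-1) \pmod 3$, valid for every integer $k$: expanding $\om^k = (1+(\om-1))^k$ by the binomial theorem, every term of degree $\ge 2$ carries the factor $(\om-1)^2 = -3\om$, which is divisible by~$3$ in $\Z[\om]$. Writing $Y_i = \om^{y_i}$ and $Z_i = \om^{z_i}$ with $y_i,z_i \in \Z$, summing this congruence gives $W \equiv c\,(\om - 1) \pmod 3$, where $c = \sum_i y_i - \sum_i z_i$. Reading off coordinates in the basis $\{1,\om\}$ yields $A \equiv -c$ and $B \equiv c \pmod 3$; in particular $A + B \equiv 0 \pmod 3$ always, and $3 \mid A \iff 3 \mid B \iff c \equiv 0 \pmod 3$. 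Since $\prod_i Y_i = \om^{\sum_i y_i}$ and $\prod_i Z_i = \om^{\sum_i z_i}$, the condition $\prod_i Y_i = \prod_i Z_i$ is precisely $c \equiv 0 \pmod 3$, hence precisely $3 \mid A$.

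It remains to evaluate $A^2 - AB + B^2$ modulo~$9$ under the constraint $A+B \equiv 0 \pmod 3$. Substituting $B = -A + 3t$ for an integer~$t$ and expanding, the form collapses to $3A^2 - 9At + 9t^2 \equiv 3A^2 \pmod 9$. Therefore $\abs{W}^2 \equiv 3A^2 \pmod 9$, which is $\equiv 0 \pmod 9$ when $3 \mid A$ and $\equiv 3 \pmod 9$ when $3 \nmid A$ (as $A^2 \equiv 1 \pmod 3$ in that case). Combining this with the identification $3 \mid A \iff \prod_i Y_i = \prod_i Z_i$ from the previous step gives exactly the claimed dichotomy. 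I expect the main obstacle to be this final step: forcing the ``otherwise'' value to be exactly~$3$ rather than merely a nonzero multiple of~$3$. This is what compels the explicit use of the norm form $A^2 - AB + B^2$ together with the first-order fact $A + B \equiv 0 \pmod 3$; a cruder argument showing only that $(\om-1) \mid W$ (equivalently $3 \mid \abs{W}^2$) would not separate the two residues, and a direct expansion of $\abs{W}^2$ starting from $W \equiv c(\om-1) \pmod 3$ leaves an awkward cross term whose residue would still need to be controlled.
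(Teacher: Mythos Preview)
Your proof is correct and follows essentially the same route as the paper: write $W=\sum(Y_i-Z_i)=A+B\om$, evaluate the norm form $\abs{W}^2=A^2-AB+B^2$, show $A+B\equiv 0\pmod 3$, and reduce modulo~$9$ to get $3A^2$ (equivalently~$3B^2$). The only difference is in bookkeeping: the paper obtains $A+B\equiv 0\pmod 3$ and the link to $\prod Y_i=\prod Z_i$ by counting how many times each root of unity occurs in $(Y_i)$ and~$(Z_i)$, whereas you obtain the same two facts via the Eisenstein-integer congruence $\om^k\equiv 1+k(\om-1)\pmod 3$.
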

\begin{proof}
For $j = 0, 1, 2$, let the sequence $(Y_i)$ contain $\om^j$ exactly $\alpha_j$ times and let the sequence $(Z_i)$ contain $\om^j$ exactly $\beta_j$ times. Then 
\begin{equation}
\alpha_0 + \alpha_1 + \alpha_2 = \beta_0 + \beta_1 + \beta_2 = n,
\label{eqn:alpha-beta-sum}
\end{equation}
and
\begin{align*}
\sum_{i=0}^{n-1}(Y_i-Z_i) 
 &= (\alpha_0-\beta_0) + \om (\alpha_1-\beta_1) + \om^2 (\alpha_2-\beta_2)\\ 
 &= a + \om b,
\end{align*}
using $\om^2 = -1 -\om$ and writing $a = (\alpha_0-\alpha_2) - (\beta_0-\beta_2)$ and $b = (\alpha_1-\alpha_2) - (\beta_1-\beta_2)$.
Therefore
\begin{align}
\biggabs{\sum_{i=0}^{n-1}(Y_i-Z_i)}^2
 &= a^2+b^2-ab, \nonumber
\intertext{which is an integer because $a$ and $b$ are integers. Since $a+b \equiv 0 \pmod{3}$ by \eqref{eqn:alpha-beta-sum}, this implies}
\biggabs{\sum_{i=0}^{n-1}(Y_i-Z_i)}^2 
  &\equiv 3b^2 \pmod{9}. \label{eqn:S3b2}
\end{align}

Now $\prod_{i=0}^{n-1}Y_i = 1^{\alpha_0} \om^{\alpha_1} (\om^2)^{\alpha_2} = \om^{\alpha_1-\alpha_2}$, and likewise $\prod_{i=0}^{n-1}Z_i = \om^{\beta_1-\beta_2}$, so that 
\[
\prod_{i=0}^{n-1}Y_i = \prod_{i=0}^{n-1}Z_i \quad \mbox{if and only if} \quad b \equiv 0 \pmod{3}
\]
by the definition of $b$. Combination with \eqref{eqn:S3b2} gives the required result.
\end{proof}

\begin{proof}[Proof of \cref{thm:4mod6}]
Suppose, for a contradiction, that $\{(A_i), (B_i), (C_i)\}$ is a 3-phase length $2m$ Golay sequence triad, where $m \equiv 2 \pmod{3}$. Then by \cref{prop:sum},
\begin{equation}
\prod_{i=0}^{m-1}A_{2i}B_{2i}C_{2i} = \prod_{i=0}^{m-1}A_{2i+1}B_{2i+1}C_{2i+1}.
\label{eqn:left-right}
\end{equation}
But $\{(A_i), (B_i), (C_i)\}$ is also a 3-phase length~$2m$ periodic Golay sequence triad by \eqref{eqn:sum-aper}, so we obtain a contradiction to~\eqref{eqn:left-right} from \cref{prop:periodic}.
\end{proof}

\section{Golay array triads}
\label{sec:arrays}
In this section, we extend the definition of a Golay sequence triad to multiple dimensions, and explore the relationship between multi-dimensional Golay array triads and Golay sequence triads. 

We consider an $s_1 \times \dots \times s_r$ \emph{array} to be an $r$-dimensional matrix $\mcal{A}=(A_{i_1, \dots, i_r})$ of complex-valued entries, indexed by integers $i_1, \dots, i_r$, for which
\[
\mbox{$A_{i_1, \dots, i_r} = 0$ if, for at least one $k \in \{1, \dots, r\}$, either $i_k < 0$ or $i_k \ge s_k$}.
\]
The {\em aperiodic autocorrelation function\/} of $\mcal{A}$ is 
\[
C_{\mcal{A}}(u_1, \dots, u_r) = \sum_{i_1,\dots,i_r} A_{i_1, \dots, i_r} \conj{A_{i_1+u_1, \dots, i_r+u_r}} \quad \mbox{for integers $u_1, \dots, u_r$}.
\]
This function satisfies 
\begin{equation}
\label{eqn:conj}
C_{\mcal{A}}(-u_1, \dots, -u_r) = \conj{C_{\mcal{A}}(u_1, \dots, u_r)} \quad \mbox{for all~$(u_1, \dots, u_r)$}.
\end{equation}
The following definitions are each analogous to those for sequences:
a 3-phase array; the $s_1 \times \dots \times s_r$ array $(a_{i_1, \dots,i_r})$ over $\Z_3$ corresponding to a 3-phase $s_1 \times \dots \times s_r$ array $(A_{i_1, \dots, i_r})$; and the aperiodic autocorrelation function of $(a_{i_1, \dots, i_r})$.

An unordered set of three 3-phase $s_1 \times \dots \times s_r$ arrays $\{\mcal{A}, \mcal{B}, \mcal{C}\}$ is a \emph{Golay array triad} if
\[
\big( C_{\mcal{A}} + C_{\mcal{B}} + C_{\mcal{C}} \big) (u_1, \ldots, u_r) = 0 \quad
			\mbox{for all $(u_1, \ldots, u_r) \neq (0, \ldots, 0)$},
\]
and an array $\mcal{A}$ is called a {\em Golay array\/} if it belongs to one or more 3-phase Golay array triads.

\begin{example}
\label{ex:arraytriad}
The arrays 
\[
\mcal{A} = (a_{i,j}) = \left [ \begin{array}{*{3}{c @{\hspace{1.7mm}} }} 0 & 0 & 2 \\ 2 & 0 & 0 \end{array} \right ], \quad
\mcal{B} = (b_{i,j}) = \left [ \begin{array}{*{3}{c @{\hspace{1.7mm}} }} 0 & 2 & 2 \\ 1 & 2 & 1 \end{array} \right ], \quad
\mcal{C} = (c_{i,j}) = \left [ \begin{array}{*{3}{c @{\hspace{1.7mm}} }} 0 & 1 & 0 \\ 1 & 1 & 2 \end{array} \right ] 
\]
over~$\Z_3$ satisfy
\begin{align*}
\big( C_{\mcal{A}}(u,v) \mid 0 \le u < 2, -3 < v < 3 \big) &= 
\begin{bmatrix} -1 & 1 & 6 & 1 & -1 \\ 1 & -1 & 0 & 2 & 1 \end{bmatrix}, \\[1ex]
\big( C_{\mcal{B}}(u,v) \mid 0 \le u < 2, -3 < v < 3 \big) &= 
\begin{bmatrix} -\om & \om^2 & 6 & \om & -\om^2 \\ \om & -\om^2 & 0 & 2\om & \om^2 \end{bmatrix}, \\[1ex]
\big( C_{\mcal{C}}(u,v) \mid 0 \le u < 2, -3 < v < 3 \big) &= 
\begin{bmatrix} -\om^2 & \om & 6 & \om^2 & -\om \\ \om^2 & -\om & 0 & 2\om^2 & \om \end{bmatrix}],
\end{align*}
and therefore comprise a $2 \times 3$ Golay array triad over~$\Z_3$. 
The $2 \times 3$ array $(a_{i,j})$ can instead by represented as the $3 \times 2$ array $(a'_{i,j})$, where $a'_{i,j} = a_{j,i}$ for all $i,j$. However, we do not consider arrays obtained by reordering dimensions to be distinct: they are different formal representations of the same object.
\end{example}

Since the arrays $(x_{i_1,\dots,i_r})$ and $(x_{i_1,\dots,i_r}+1)$ over~$\Z_3$ have identical aperiodic autocorrelation function, an $s_1 \times \dots \times s_r$  Golay sequence triad 
$\{(a_{i_1, \dots, i_r}),\, (b_{i_1, \dots, i_r}),\, (c_{i_1, \dots, i_r})\}$ over~$\Z_3$ may be assumed to satisfy
\[
(a_{0,\dots,0},\, b_{0,\dots,0},\, c_{0,\dots,0}) =(0,0,0),
\]
in which case we say it is in \emph{normalised form}.

The following result is the multi-dimensional version of \cref{lem:equiv-class-sequence-triad}.

\begin{lemma}
\label{lem:equiv-class-array-triad}
\mbox{}
\begin{enumerate}[(i)]

\item
{\rm (Linear Offsets).}
Suppose that $\{(a_{i_1, \dots, i_r}),\, (b_{i_1, \dots, i_r}),\, (c_{i_1, \dots, i_r})\}$ 
is an $s_1 \times \dots \times s_r$ Golay array triad over~$\Z_3$.
Then 
\[
\big \{ 
\big (a_{i_1,\dots,i_r} + e_1 i_1 + \dots + e_r i_r \big),\, 
\big (b_{i_1,\dots,i_r} + e_1 i_1 + \dots + e_r i_r \big),\, 
\big (c_{i_1,\dots,i_r} + e_1 i_1 + \dots + e_r i_r \big)
\big \}
\]
is also an $s_1 \times \dots \times s_r$ Golay array triad over~$\Z_3$
for all $e_1, \dots, e_r \in \Z_3$.

\item
{\rm (Reversal).}
Suppose that $\{(a_{i_1, \dots, i_r}),\, (b_{i_1, \dots, i_r}),\, (c_{i_1, \dots, i_r})\}$ 
is an $s_1 \times \dots \times s_r$ Golay array triad over~$\Z_3$.
Then for each $k \in \{1, \dots, r\}$,
\[
\{(a_{i_1, \dots, s_k-1-i_k, \dots, i_r}),\, (b_{i_1, \dots, s_k-1-i_k, \dots, i_r}),\, (c_{i_1, \dots, s_k-1-i_k, \dots, i_r})\}
\]
is also an $s_1 \times \dots \times s_r$ Golay array triad over~$\Z_3$.

\item
{\rm (Reverse Conjugation).}
Let $(x_{i_1,\dots,i_r})$ be an $s_1 \times \dots \times s_r$ array over~$\Z_3$.
Then the $s_1 \times \dots \times s_r$ array 
$(2x_{s_1-1-i_1,\dots,s_r-1-i_r})$ over $\Z_3$ has identical aperiodic autocorrelation function to~$(x_{i_1,\dots,i_r})$.
\end{enumerate}
\end{lemma}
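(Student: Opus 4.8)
The plan is to prove all three parts by reindexing the defining sum for the aperiodic autocorrelation function $C_{\mcal{A}}$, exactly as in the one-dimensional \cref{lem:equiv-class-sequence-triad} but now tracking an $r$-tuple of indices. Writing $\vec{i}=(i_1,\dots,i_r)$ and $\vec{u}=(u_1,\dots,u_r)$, the common idea for parts $(i)$ and $(ii)$ is to show that the map sending each array $\mcal{A}$ to its modified array $\mcal{A}'$ transforms the autocorrelation in a uniform way: either it multiplies $C_{\mcal{A}}(\vec{u})$ by a scalar that does not depend on which array we started from, or it evaluates $C_{\mcal{A}}$ at a transformed offset $\phi(\vec{u})$, where $\phi$ is a bijection of offset vectors fixing $(0,\dots,0)$. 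Since the same modification is applied to $\mcal{A}$, $\mcal{B}$ and $\mcal{C}$, summing the three transformed autocorrelations and invoking the hypothesis $(C_{\mcal{A}}+C_{\mcal{B}}+C_{\mcal{C}})(\vec{u})=0$ for $\vec{u}\ne(0,\dots,0)$ yields the corresponding identity for $\{\mcal{A}',\mcal{B}',\mcal{C}'\}$, precisely because $\phi$ never maps a nonzero offset to the origin.

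For part $(i)$, in multiplicative notation adding $e_1 i_1+\dots+e_r i_r$ to each entry amounts to multiplying $A_{\vec{i}}$ by $\om^{e_1 i_1+\dots+e_r i_r}$. I would substitute this into $A'_{\vec{i}}\,\conj{A'_{\vec{i}+\vec{u}}}$ and observe that the index-dependent part of the phase cancels, leaving the constant factor $\om^{-(e_1 u_1+\dots+e_r u_r)}$; hence $C_{\mcal{A}'}(\vec{u})=\om^{-(e_1 u_1+\dots+e_r u_r)}\,C_{\mcal{A}}(\vec{u})$, with the identical factor for $\mcal{B}$ and $\mcal{C}$. For part $(ii)$, reversal in coordinate $k$ replaces $i_k$ by $s_k-1-i_k$; applying the substitution $j_k=s_k-1-i_k$ (and $j_l=i_l$ for $l\ne k$) to the autocorrelation sum turns the shift $+u_k$ into $-u_k$, giving $C_{\mcal{A}'}(\vec{u})=C_{\mcal{A}}(u_1,\dots,-u_k,\dots,u_r)$. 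Here $\phi$ negates the $k$-th coordinate, which is indeed a bijection fixing the origin.

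For part $(iii)$ only a single array is involved, and the goal is simply that $C$ is unchanged. In multiplicative notation $x\mapsto 2x$ is complex conjugation (since $\om^{2a}=\conj{\om^a}$), and reversing all coordinates sends $\vec{i}$ to $\vec{s}-1-\vec{i}$, so the modified array is $X'_{\vec{i}}=\conj{X_{\vec{s}-1-\vec{i}}}$. I would write out $C_{\mcal{X}'}(\vec{u})=\sum_{\vec{i}}\conj{X_{\vec{s}-1-\vec{i}}}\,X_{\vec{s}-1-\vec{i}-\vec{u}}$, noting that the double conjugation cancels, and then apply the substitution $\vec{j}=\vec{s}-1-\vec{i}-\vec{u}$ to recover $\sum_{\vec{j}} X_{\vec{j}}\,\conj{X_{\vec{j}+\vec{u}}}=C_{\mcal{X}}(\vec{u})$.

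None of these steps presents a genuine obstacle; the only care required is bookkeeping with the $r$-fold index substitutions. This is kept clean by the convention that every array is zero outside its support, so each autocorrelation sum may be taken over all integer tuples $\vec{i}\in\Z^r$ and the substitutions above are genuine bijections of the summation index with no boundary terms to track. The one conceptual point worth stating explicitly, and the only place where the triad structure (rather than a single array) matters, is that in parts $(i)$ and $(ii)$ the induced offset transformation $\phi$ fixes $(0,\dots,0)$, so that vanishing of the summed autocorrelation off the origin is preserved.
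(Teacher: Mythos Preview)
Your proposal is correct and matches the paper's (implicit) approach: the paper gives no proof for this lemma, simply stating that it is the multi-dimensional version of \cref{lem:equiv-class-sequence-triad}, which in turn ``follows directly from the definition of aperiodic autocorrelation function and Golay sequence triad.'' Your reindexing arguments are exactly the direct verification the paper has in mind, and the care you take to note that the offset bijection $\phi$ fixes the origin is the only point worth making explicit.
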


The Golay array triads over~$\Z_3$ obtained by applying one or more of the operations described in \cref{lem:equiv-class-array-triad}, and then normalising, form an equivalence class whose size divides $3^r \cdot 2^r \cdot 2^3 = 2^{r+3}\cdot 3^r$. We take the representative of the equivalence class to be its lexicographically first member. 
For example, there are exactly three equivalence class of $2 \times 7$ Golay array triads over~$\Z_3$, each of size 288, and their equivalence class representatives are
\begin{align*}
&\bigg \{
\left [ \begin{array}{*{7}{c @{\hspace{1.7mm}} }} 0 & 0 & 0 & 1 & 0 & 2 & 2 \\ 0 & 0 & 2 & 0 & 1 & 1 & 0 \end{array} \right ],\,
\left [ \begin{array}{*{7}{c @{\hspace{1.7mm}} }} 0 & 2 & 0 & 1 & 2 & 0 & 0 \\ 0 & 2 & 2 & 2 & 0 & 2 & 1 \end{array} \right ],\,
\left [ \begin{array}{*{7}{c @{\hspace{1.7mm}} }} 0 & 1 & 0 & 2 & 1 & 1 & 1 \\ 0 & 1 & 2 & 2 & 2 & 0 & 2 \end{array} \right ]
\bigg \}, \\[1ex]
&\bigg \{
\left [ \begin{array}{*{7}{c @{\hspace{1.7mm}} }} 0 & 0 & 0 & 1 & 0 & 2 & 2 \\ 2 & 0 & 2 & 1 & 0 & 0 & 0 \end{array} \right ],\,
\left [ \begin{array}{*{7}{c @{\hspace{1.7mm}} }} 0 & 0 & 2 & 0 & 1 & 1 & 0 \\ 2 & 0 & 1 & 1 & 1 & 2 & 1 \end{array} \right ],\,
\left [ \begin{array}{*{7}{c @{\hspace{1.7mm}} }} 0 & 0 & 1 & 0 & 2 & 0 & 1 \\ 2 & 0 & 0 & 2 & 2 & 1 & 2 \end{array} \right ]
\bigg \}, \\[1ex]
&\bigg \{
\left [ \begin{array}{*{7}{c @{\hspace{1.7mm}} }} 0 & 0 & 0 & 1 & 0 & 2 & 2 \\ 2 & 2 & 0 & 2 & 1 & 2 & 0 \end{array} \right ],\,
\left [ \begin{array}{*{7}{c @{\hspace{1.7mm}} }} 0 & 1 & 1 & 1 & 1 & 0 & 2 \\ 1 & 1 & 2 & 0 & 1 & 2 & 1 \end{array} \right ],\,
\left [ \begin{array}{*{7}{c @{\hspace{1.7mm}} }} 0 & 1 & 0 & 0 & 2 & 2 & 0 \\ 1 & 1 & 1 & 0 & 2 & 1 & 2 \end{array} \right ]
\bigg \}.
\end{align*}

We now introduce an invertible mapping that reduces the number of dimensions of an array by exactly one and that maps Golay array triads to Golay array triads.
Our method is modelled on that of \cite{jedwab-parker-golay-arrays} for 2-phase Golay array pairs, as subsequently used in~\cite{fiedler-jedwab-parker-multidim-golay} and~\cite{gibson-jedwab-even} for $H$-phase Golay array pairs where $H$ is even.
For an $s_1 \times \dots \times s_r$ array $\mcal{A} = (A_{i_1, \dots, i_r})$ (where $r \ge 2$), the \emph{projection} $\psi_{1,2}(\mcal{A})$ is the 
$s_1s_2 \times s_3 \dots \times s_r$ array $(B_{i, i_3, \ldots, i_r})$ given by
\[
B_{i_1 + s_1 i_2, i_3, \dots, i_r} = A_{i_1, \dots, i_r}, \quad
		\mbox{where $0 \le i_1 < s_1$}.
\]
The same definition of $\psi_{1,2}$ holds for an array over~$\Z_3$.
For example, $\psi_{1,2}$ maps the three $2 \times 3$ arrays over~$\Z_3$ of \cref{ex:arraytriad} to the three length~$6$ sequences over~$\Z_3$ of \cref{ex:seqtriad}.
For distinct $k, \ell \in \{1, \dots, r\}$, the array $\psi_{k,\ell}(\mcal{A})$ is defined similarly by replacing the array argument~$i_\ell$ by $i_k + s_k i_\ell$ and removing the array argument~$i_k$.
The mapping $\psi_{k,\ell}$ replaces the $s_k \times s_\ell$ ``slice'' of $\mcal{A}$ formed from dimensions~$k$ and~$\ell$ by the sequence obtained when the elements of the slice are listed column by column.

The following lemma expresses the aperiodic autocorrelation function
$C_{\psi_{1,2}(\mcal{A})}$ as the sum of two terms involving $C_{\cal A}$ (one or both of which might be trivially zero, according to the values of the arguments).
We use this to prove in \cref{thm:proj} that the existence of an $r$-dimensional Golay array triad implies the existence of an $(r-1)$-dimensional Golay array triad.

\begin{lemma}
[{\cite[Lemma~10]{jedwab-parker-golay-arrays}}]
\label{lem:sum}
Let $\mcal{A}$ be a $3$-phase $s_1 \times \cdots \times s_r$ array, where $r \ge 2$.  Then
\begin{align*}
\lefteqn{C_{\psi_{1,2}({\cal A})}(u_1+s_1u_2, u_3, \ldots, u_r)} \hspace{35mm} \\
 & = C_{\cal A}(u_1, \ldots, u_r) + C_{\cal A}(u_1-s_1, u_2+1, u_3, \ldots, u_r) \mbox{ for $0 \le u_1 < s_1$}.
\end{align*}
\end{lemma}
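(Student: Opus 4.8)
The plan is to expand $C_{\psi_{1,2}(\mcal{A})}$ directly from the definition of aperiodic autocorrelation and then re-index the combined first coordinate to recover two copies of $C_{\mcal{A}}$. Writing $\mcal{B} = \psi_{1,2}(\mcal{A})$, whose first dimension has size $s_1 s_2$, I would start from
\[
C_{\mcal{B}}(u_1 + s_1 u_2,\, u_3, \ldots, u_r) = \sum_{j,\, i_3, \ldots, i_r} B_{j,\, i_3, \ldots, i_r}\, \conj{B_{j + u_1 + s_1 u_2,\, i_3 + u_3, \ldots, i_r + u_r}}.
\]
Since $\mcal{B}$ is zero-padded, only $0 \le j < s_1 s_2$ contributes, so I would substitute the unique decomposition $j = i_1 + s_1 i_2$ with $0 \le i_1 < s_1$ and $0 \le i_2 < s_2$, using the defining relation $B_{i_1 + s_1 i_2,\, \ldots} = A_{i_1, i_2, \ldots}$ to replace the first factor by $A_{i_1, \ldots, i_r}$.

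The key step is handling the shifted index in the conjugated factor. Here $j + u_1 + s_1 u_2 = (i_1 + u_1) + s_1(i_2 + u_2)$, and because $0 \le u_1 < s_1$ the value $i_1 + u_1$ lies in $[0, 2s_1)$, so at most a single ``carry'' of one can propagate into the second coordinate. I would split the sum over $i_1$ into the range $0 \le i_1 < s_1 - u_1$ (no carry) and $s_1 - u_1 \le i_1 < s_1$ (carry of one). In the no-carry range the conjugated entry is $\conj{A_{i_1 + u_1,\, i_2 + u_2,\, \ldots}}$, while in the carry range, applying the defining relation for $B$ in reverse with $i_1' = i_1 + u_1 - s_1$ and quotient $i_2 + u_2 + 1$, it equals $\conj{A_{i_1 + u_1 - s_1,\, i_2 + u_2 + 1,\, \ldots}}$.

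The final step is to recognise each partial sum as a complete $C_{\mcal{A}}$ term. For the no-carry part, the joint requirement that both $A$-entries be nonzero forces $0 \le i_1 < s_1$ and $0 \le i_1 + u_1 < s_1$, which (since $u_1 \ge 0$) is exactly $0 \le i_1 < s_1 - u_1$; the analogous support conditions on $i_2$ and the remaining coordinates are automatically supplied by the zero-padding, so summing over everything reproduces precisely $C_{\mcal{A}}(u_1, \ldots, u_r)$. For the carry part, the support condition $0 \le i_1 + u_1 - s_1 < s_1$ on the conjugated entry restricts $i_1$ to exactly $s_1 - u_1 \le i_1 < s_1$, matching the support of $C_{\mcal{A}}(u_1 - s_1,\, u_2 + 1,\, u_3, \ldots, u_r)$ and yielding the second term. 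I expect the main obstacle to be the index bookkeeping: one must track the zero-padded support of $\mcal{A}$ carefully enough to confirm that the two case-ranges coincide exactly with the support conditions of the two target correlations — and in particular that the second term vanishes automatically (as an empty sum) when $u_1 = 0$ or when the shift $(u_1 - s_1,\, u_2 + 1)$ pushes the overlap out of range — so that no boundary contributions are lost or double-counted.
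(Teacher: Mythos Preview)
Your approach is correct: the direct expansion of $C_{\psi_{1,2}(\mcal{A})}$, re-indexing $j=i_1+s_1 i_2$ with $0\le i_1<s_1$, and splitting on whether $i_1+u_1$ carries past $s_1$ is exactly the computation that yields the two $C_{\mcal{A}}$ terms, and your support-matching argument is sound. Note, however, that the paper does not supply its own proof of this lemma --- it simply cites \cite[Lemma~10]{jedwab-parker-golay-arrays} --- so there is no ``paper's proof'' to compare against; your argument is the standard direct verification and is essentially the one given in the cited reference.
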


\begin{theorem}[Projection mapping]
\label{thm:proj}
Suppose that $\{\mcal{A}, \mcal{B}, \mcal{C}\}$ is a $3$-phase $s_1 \times \cdots \times s_r$ Golay array triad, where $r \ge 2$. Then 
$\big\{\psi_{1,2}(\mcal{A}), \psi_{1,2}(\mcal{B}), \psi_{1,2}(\mcal{C}) \big\}$ 
is a $3$-phase $s_1 s_2 \times s_3 \times \cdots \times s_r$ Golay array triad.
\end{theorem}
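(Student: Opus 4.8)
The plan is to reduce everything to a direct, termwise application of \cref{lem:sum} together with the defining Golay condition on the original triad. First I would note the trivial fact that $\psi_{1,2}$ merely relabels the entries of an array (listing each $s_1 \times s_2$ slice column by column), so it carries a 3-phase array to a 3-phase array; hence $\psi_{1,2}(\mcal{A}), \psi_{1,2}(\mcal{B}), \psi_{1,2}(\mcal{C})$ are indeed 3-phase arrays of the claimed dimensions $s_1 s_2 \times s_3 \times \cdots \times s_r$, and only the correlation condition remains to be checked. To verify that condition, I would take an \emph{arbitrary} argument $(v, u_3, \ldots, u_r)$ of the projected correlation sum and write the first coordinate via division with remainder as $v = u_1 + s_1 u_2$, where $u_1 = v \bmod s_1$ satisfies $0 \le u_1 < s_1$ and $u_2 = \floor{v/s_1}$; this decomposition is unique, and in particular $v = 0$ holds if and only if $(u_1, u_2) = (0,0)$.

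The core computation is then to apply \cref{lem:sum} to each of $\mcal{A}, \mcal{B}, \mcal{C}$ and add, giving
\begin{align*}
\big( C_{\psi_{1,2}(\mcal{A})} + C_{\psi_{1,2}(\mcal{B})} + C_{\psi_{1,2}(\mcal{C})} \big)(u_1 + s_1 u_2, u_3, \ldots, u_r)
 &= \big( C_\mcal{A} + C_\mcal{B} + C_\mcal{C} \big)(u_1, u_2, u_3, \ldots, u_r) \\
 &\quad + \big( C_\mcal{A} + C_\mcal{B} + C_\mcal{C} \big)(u_1 - s_1, u_2+1, u_3, \ldots, u_r),
\end{align*}
valid because $0 \le u_1 < s_1$. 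The key observation I would stress is that the second summand on the right is \emph{always} zero: its first argument is $u_1 - s_1$, which is strictly negative since $u_1 < s_1$, so its argument vector is never $(0, \ldots, 0)$, and the Golay property of $\{\mcal{A}, \mcal{B}, \mcal{C}\}$ forces the value to vanish. Thus the whole right-hand side collapses to the single term $\big( C_\mcal{A} + C_\mcal{B} + C_\mcal{C} \big)(u_1, u_2, u_3, \ldots, u_r)$.

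To finish, I would invoke the Golay property once more: this remaining term is zero unless $(u_1, u_2, u_3, \ldots, u_r) = (0, \ldots, 0)$. By the uniqueness of the decomposition, $(u_1, u_2) = (0,0)$ is equivalent to $v = 0$, so the term survives precisely when $(v, u_3, \ldots, u_r) = (0, \ldots, 0)$. Hence for every nonzero argument the projected correlation sum vanishes, which is exactly the Golay array triad condition for $\{\psi_{1,2}(\mcal{A}), \psi_{1,2}(\mcal{B}), \psi_{1,2}(\mcal{C})\}$. I do not expect a genuine obstacle here: the only point requiring care is that the boundary constraint $0 \le u_1 < s_1$ in \cref{lem:sum} is exactly what annihilates the cross term, and that the argument must be run for all integers $v$ (not merely $0 \le v < s_1 s_2$) via the remainder decomposition; everything else is bookkeeping.
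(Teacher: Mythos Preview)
Your proposal is correct and follows essentially the same approach as the paper: both apply \cref{lem:sum} termwise and invoke the Golay property of $\{\mcal{A},\mcal{B},\mcal{C}\}$ to kill the two resulting summands. You are slightly more explicit than the paper in separating the reasons the two summands vanish (the second because $u_1 - s_1 < 0$ forces a nonzero argument, the first by the assumed nonzero argument), whereas the paper dispatches both at once with a single appeal to the Golay condition; this is a difference of exposition, not of method.
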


\begin{proof}
Let $u_1, \ldots, u_r$ be integers, where $0 \leq u_1 < s_1$
and $(u_1, \ldots, u_r) \neq (0, \ldots, 0)$.
By Lemma~\ref{lem:sum},
\begin{align*}
\lefteqn{\big( C_{\psi_{1,2}(\mcal{A})} + C_{\psi_{1,2}(\mcal{B})} + C_{\psi_{1,2}(\mcal{C})} \big) (u_1+s_1u_2, u_3, \ldots, u_r)} \hspace{15mm} \\
&= \big( C_{\cal A} + C_{\cal B} + C_{\cal C} \big) (u_1, \ldots, u_r) + 
   \big( C_{\cal A} + C_{\cal B} + C_{\cal C} \big) (u_1-s_1, u_2+1, u_3, \ldots, u_r) \\
&=  0
\end{align*}
because $\{\mcal{A}, \mcal{B}, \mcal{C}\}$ is a 3-phase $s_1 \times \dots \times s_r$ Golay array triad. So
$\big\{\psi_{1,2}(\mcal{A}), \psi_{1,2}(\mcal{B}), \psi_{1,2}(\mcal{C}) \big\}$ 
is a 3-phase 
$s_1s_2 \times s_3 \times \cdots \times s_r$ Golay array triad.
\end{proof}

The effect of the projection mapping~$\psi_{1,2}$ in \cref{thm:proj} is to ``join'' dimension 1 of a Golay array triad to dimension~2. Likewise, the projection mapping~$\psi_{k,\ell}$ for distinct $k, \ell \in \{1,\dots,r\}$ joins dimension $k$ of a Golay array triad to dimension~$\ell$.

The following corollary arises from repeated application of \cref{thm:proj}.
\begin{corollary}
\label{cor:arraytoseq}
Suppose there exists a $3$-phase $s_1 \times \dots \times s_r$ Golay array triad. Then there exists a $3$-phase length $\prod_{k=1}^r s_k$ Golay sequence triad.
\end{corollary}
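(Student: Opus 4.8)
The plan is to induct on the number of dimensions $r$, peeling off one dimension at a time with the projection mapping $\psi_{1,2}$ of \cref{thm:proj}, which converts an $r$-dimensional Golay array triad into an $(r-1)$-dimensional Golay array triad while preserving the total number of array entries.

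First I would dispose of the base case $r = 1$: a 3-phase $s_1$ Golay array triad is, by definition, a 3-phase length~$s_1$ Golay sequence triad, and $\prod_{k=1}^{1} s_k = s_1$, so the conclusion holds trivially with no projection required.

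For the inductive step I would assume the result for all $(r-1)$-dimensional Golay array triads and take $r \ge 2$. Given a 3-phase $s_1 \times \dots \times s_r$ Golay array triad $\{\mcal{A}, \mcal{B}, \mcal{C}\}$, \cref{thm:proj} guarantees that $\{\psi_{1,2}(\mcal{A}),\, \psi_{1,2}(\mcal{B}),\, \psi_{1,2}(\mcal{C})\}$ is a 3-phase $s_1 s_2 \times s_3 \times \dots \times s_r$ Golay array triad in $r - 1$ dimensions, whose dimension sizes multiply to $(s_1 s_2)\, s_3 \cdots s_r = \prod_{k=1}^r s_k$. Applying the induction hypothesis to this triad then produces a 3-phase length $\prod_{k=1}^r s_k$ Golay sequence triad, which closes the induction.

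I do not expect any genuine obstacle, since \cref{thm:proj} has already done all the substantive work of verifying that the Golay property is maintained under each projection. The only point requiring care is the bookkeeping: each application of $\psi_{1,2}$ merges the two dimensions of sizes $s_1$ and $s_2$ into a single dimension of size $s_1 s_2$, so that the product of the dimension sizes is invariant across all $r - 1$ projections and therefore equals the length $\prod_{k=1}^r s_k$ of the resulting one-dimensional (sequence) triad.
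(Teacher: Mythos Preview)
Your proof is correct and matches the paper's approach: the paper simply notes that the corollary ``arises from repeated application of \cref{thm:proj}'', and your induction on~$r$ is precisely that repeated application written out in full.
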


Combination of \cref{thm:4mod6} and \cref{cor:arraytoseq} gives the following nonexistence result.

\begin{corollary}
\label{cor:4mod6array}
There is no $3$-phase $s_1 \times \dots \times s_r$ Golay array triad when $\prod_{k=1}^r s_k \equiv 4 \pmod{6}$.
\end{corollary}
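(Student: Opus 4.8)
The plan is to proceed by contradiction and reduce the multi-dimensional assertion to the one-dimensional nonexistence result already established. The two tools I would invoke are \cref{cor:arraytoseq}, which converts any $3$-phase $s_1 \times \dots \times s_r$ Golay array triad into a $3$-phase Golay sequence triad of length $\prod_{k=1}^r s_k$, and \cref{thm:4mod6}, which forbids $3$-phase Golay sequence triads of length congruent to $4$ modulo~$6$. The whole corollary is then a direct logical composition of these two facts.

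Concretely, I would suppose for a contradiction that a $3$-phase $s_1 \times \dots \times s_r$ Golay array triad exists with $\prod_{k=1}^r s_k \equiv 4 \pmod 6$. Setting $s = \prod_{k=1}^r s_k$, applying \cref{cor:arraytoseq} yields a $3$-phase length~$s$ Golay sequence triad. Since $s \equiv 4 \pmod 6$ by hypothesis, this directly contradicts \cref{thm:4mod6}, and the contradiction completes the proof.

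I do not expect any genuine obstacle at this stage: all of the mathematical content has already been absorbed into the supporting results (in particular into \cref{prop:periodic} and \cref{lem:diff-roots}, which underpin \cref{thm:4mod6}, and into the iterated projection mapping of \cref{thm:proj} underlying \cref{cor:arraytoseq}). The only point that needs to be observed explicitly is that the quantity constrained by the hypothesis, namely $\prod_{k=1}^r s_k$, is exactly the length of the sequence triad produced by \cref{cor:arraytoseq}; this ensures that the congruence hypothesis transfers unchanged from the array setting to the sequence setting, where \cref{thm:4mod6} can be applied verbatim.
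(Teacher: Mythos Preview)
Your proposal is correct and matches the paper's own argument exactly: the paper simply states that the corollary follows from the combination of \cref{thm:4mod6} and \cref{cor:arraytoseq}, which is precisely the composition you describe.
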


We use \cref{thm:proj} to determine the equivalence classes of all 3-phase \mbox{$s_1 \times \dots \times s_r$} Golay array triads for which $\prod_{k=1}^r s_k \le 24$. 
For example, to determine the equivalence classes of $2 \times 9$ Golay array triads, let $\psi_{1,2}$ be the projection mapping from $2 \times 9$ arrays to length~$18$ sequences.
Apply the inverse mapping $\psi_{1,2}^{-1}$ to each length~$18$ Golay sequence triad equivalence class representative (as previously determined, and summarised in \cref{tab:sequence-counts}), retaining those $2 \times 9$ triads having the Golay property. Collect the resulting Golay array triads into equivalence classes and retain only the representative of each equivalence class.
To then determine the equivalence classes of $2 \times 3 \times 3$ Golay triads, let $\psi_{2,3}$ be the projection mapping from $2 \times 3 \times 3$ arrays to $2 \times 9$ arrays.
Apply the inverse mapping $\psi_{2,3}^{-1}$ to each $2 \times 9$ Golay array triad equivalence class representative and proceed similarly. (We can alternatively determine the equivalence classes of $2 \times 3 \times 3$ Golay array triads from the $3 \times 6$ Golay array triads, obtained in turn from the length 18 Golay sequence triads.) 
Likewise, we determine from the length~$20$ Golay sequence triads that there are no $2 \times 10$ and no $4 \times 5$ Golay array triads, either of which implies by \cref{thm:proj} that there are also none of size $2 \times 2 \times 5$.

\cref{tab:array-counts} shows the resulting counts of equivalence classes of Golay array triads, normalised Golay array triads, and Golay arrays.
By \cref{cor:4mod6array}, there are no $s_1 \times \dots \times s_r$ Golay array triads for which $\prod_{k=1}^r s_k \in \{4,10,16,22\}$.

\begin{table}[h]
\begin{center}
\vspace{2mm}
\begin{tabular}{|r|r|r|r|r|r|r|r|r|r|r|}
\hline
Array 			& \multicolumn{8}{c|}{\# equivalence classes}			& \# normalised	& \# Golay	\\ \cline{2-9}
size			& size & size & size & size & size & size & size& Total		& array		& arrays	\\ 
			& 24   & 48   & 72   & 96   & 144  & 288  & 576	&      		& triads      	& 		\\ \hline

$2 \times 3$		& 1    & 1    &      &      &      &	  &	& 2		& 72		& 162		\\
$2 \times 4$		&      &      &      &      &      &	  &	& 0		& 0 		& 0  		\\
$3 \times 3$		& 1    & 7    &      & 3    &      &	  &	& 11		& 648  		& 1350		\\ 
$2 \times 6$		&      &      &      &      &      &	  &	& 0		& 0 		& 0  		\\
$3 \times 4$		&      &      &      &      &      &	  &	& 0		& 0 		& 0  		\\
$2 \times 7$		&      &      &      &      &      & 3	  &	& 3   		& 864   	& 1944		\\
$3 \times 5$		&      & 18   &      & 45   &      &	  &	& 63 		& 5184 		& 11664	\\
$3 \times 6$		& 4    & 64   & 4    & 147  & 18   & 8	  &	& 245		& 22464  	& 49788		\\
$2 \times 9$ 		&      & 18   &      & 45   & 18   & 18	  &	& 99  		& 12960   	& 29160		\\
$2 \times 3 \times 3$	&      &      &      &      & 2    & 9    & 4	& 15     	& 5184  	& 11664	\\
$2 \times 10$		&      &      &      &      &      &	  &	& 0		& 0 		& 0  		\\
$4 \times 5$		&      &      &      &      &      &	  &	& 0		& 0 		& 0  		\\
$3 \times 7$		&      & 42   &      & 447  &      & 9	  &	& 498  		& 47520		& 101088	\\ 
$2 \times 12$		&      &      &      &      &      &	  &	& 0		& 0 		& 0  		\\
$3 \times 8$		&      & 6    &      & 123  &      &	  &	& 129 		& 12096 	& 27216		\\ 
$4 \times 6$		&      &      &      &      &      &	  &	& 0		& 0 		& 0  		\\ \hline
\end{tabular} 
\end{center}
\caption{Counts of $3$-phase $s_1 \times \dots \times s_r$ Golay array triads, where $\prod_{k=1}^r s_k \le 24$}
\label{tab:array-counts}
\end{table}

\section{Constructions of Golay array triads}
\label{sec:constructions}
In this section, we present two constructions for Golay array triads, and apply them to a small set of seed Golay sequence and array triads to explain the existence of many of the Golay triads counted in Tables~\ref{tab:sequence-counts} and~\ref{tab:array-counts}.

The \emph{aperiodic cross-correlation function} of $s_1 \times \dots \times s_r$ arrays $\mcal{A} = (A_{i_1,\dots,i_r})$ and $\mcal{B} = (B_{i_1,\dots,i_r})$ is
\[
C_{\mcal{A},\,\mcal{B}}(u_1,\dots,u_r) = \sum_{i_1,\dots,i_r} A_{i_1,\dots,i_r} \conj{B_{i_1+u_1,\dots,i_r+u_r}} \quad \mbox{for integers $u_1,\dots,u_r$}
\]
(so that $C_{\mcal{A},\mcal{A}}(u_1,\dots,u_r) = C_{\mcal{A}}(u_1,\dots,u_r)$ for all~$(u_1,\dots,u_r)$).
Let 
$\mcal{A} = (A_{i_1,\dots,i_r})$, 
$\mcal{B} = (B_{i_1,\dots,i_r})$, 
$\mcal{C} = (C_{i_1,\dots,i_r})$ be $s_1 \times \ldots \times s_r$ arrays. Write $\begin{bmatrix} \mcal{A} \\ \mcal{B} \\ \mcal{C} \end{bmatrix}$ for the $3 \times s_1 \times \dots \times s_r$ array $\mcal{D} = (D_{i,i_1,\dots,i_r})$ defined by
\[
D_{i,i_1,\dots,i_r} = 
\begin{cases}
A_{i_1,\dots,i_r} & \mbox{for $i=0$} \\
B_{i_1,\dots,i_r} & \mbox{for $i=1$} \\
C_{i_1,\dots,i_r} & \mbox{for $i=2$}.
\end{cases}
\]
It follows directly from the definitions that
\begin{equation}
\label{eqn:DABC} 
\left .
\begin{array}{rl}
C_{\mcal{D}}(0,u_1,\dots,u_r) &= \big( C_{\mcal{A}} + C_{\mcal{B}} + C_{\mcal{C}} \big) (u_1,\dots,u_r), \\[1ex]
C_{\mcal{D}}(1,u_1,\dots,u_r) &= \big( C_{\mcal{A},\,\mcal{B}} + C_{\mcal{B},\,\mcal{C}} \big) (u_1,\dots,u_r), \\[1ex]
C_{\mcal{D}}(2,u_1,\dots,u_r) &= C_{\mcal{A},\,\mcal{C}}(u_1,\dots,u_r). 
\end{array}
\right \}
\end{equation}

We now present our main construction method for Golay array triads.

\begin{theorem}[Increase dimension]
Suppose that 
$\{\mcal{A}, \mcal{B}, \mcal{C}\}$ 
is a $3$-phase $s_1 \times \cdots \times s_r$ Golay array triad and let
$\mcal{U}= \begin{bmatrix} \mcal{A} \\ \mcal{B}       \\ \mcal{C}       \end{bmatrix}$,
$\mcal{V}= \begin{bmatrix} \mcal{A} \\ \om \mcal{B}   \\ \om^2 \mcal{C} \end{bmatrix}$,
$\mcal{W}= \begin{bmatrix} \mcal{A} \\ \om^2 \mcal{B} \\ \om \mcal{C}   \end{bmatrix}$.
Then $\{\mcal{U}, \mcal{V}, \mcal{W}\}$ is a $3$-phase $3 \times s_1 \times \dots \times s_r$ Golay array triad.
\label{thm:3x}
\end{theorem}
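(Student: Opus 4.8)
The plan is to expand the three autocorrelations $C_{\mcal{U}}$, $C_{\mcal{V}}$, $C_{\mcal{W}}$ via \eqref{eqn:DABC}, so that each is written purely in terms of the autocorrelations and cross-correlations of $\mcal{A}$, $\mcal{B}$, $\mcal{C}$, and then to add them together one value of the new first-coordinate shift at a time. First I would note that $\mcal{U}$, $\mcal{V}$, $\mcal{W}$ really are $3$-phase arrays: since $\{1,\om,\om^2\}$ is closed under multiplication, scaling a $3$-phase array by $\om$ or $\om^2$ again yields a $3$-phase array. Because the new first dimension has length~$3$, the quantity $\big(C_{\mcal{U}}+C_{\mcal{V}}+C_{\mcal{W}}\big)(u,u_1,\dots,u_r)$ is trivially zero whenever $\abs{u}\ge 3$, and by the symmetry \eqref{eqn:conj} its value at $(-u,-u_1,\dots,-u_r)$ is the conjugate of its value at $(u,u_1,\dots,u_r)$; hence it suffices to verify the Golay condition for $u\in\{0,1,2\}$.

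The key computational ingredient is the effect of scaling on a cross-correlation. Directly from the definition, for scalars $\alpha,\beta$ one has $C_{\alpha\mcal{X},\,\beta\mcal{Y}}=\alpha\conj{\beta}\,C_{\mcal{X},\mcal{Y}}$, and in particular $C_{\alpha\mcal{X}}=\abs{\alpha}^2 C_{\mcal{X}}=C_{\mcal{X}}$ when $\abs{\alpha}=1$. Applying \eqref{eqn:DABC} to each of $\mcal{U}$, $\mcal{V}$, $\mcal{W}$ and using this rule (together with $\conj{\om}=\om^2$ and $\conj{\om^2}=\om$), I would record, for each fixed first-shift $u$, the coefficient that the scaling factors contribute. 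For $u=0$ the modulus-one scalars cancel in every autocorrelation, so each of $C_{\mcal{U}}$, $C_{\mcal{V}}$, $C_{\mcal{W}}$ contributes exactly $C_{\mcal{A}}+C_{\mcal{B}}+C_{\mcal{C}}$. For $u=1$ the three arrays contribute $C_{\mcal{A},\mcal{B}}+C_{\mcal{B},\mcal{C}}$ weighted by $1$, $\om^2$, $\om$ respectively, and for $u=2$ they contribute $C_{\mcal{A},\mcal{C}}$ weighted by $1$, $\om$, $\om^2$.

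Summing then finishes the proof. At $u=0$ the total is $3\big(C_{\mcal{A}}+C_{\mcal{B}}+C_{\mcal{C}}\big)(u_1,\dots,u_r)$, which vanishes for $(u_1,\dots,u_r)\ne(0,\dots,0)$ because $\{\mcal{A},\mcal{B},\mcal{C}\}$ is a Golay array triad; this is the only case that uses the hypothesis. At $u=1$ and $u=2$ the weights sum to $1+\om+\om^2=0$, so the totals vanish identically, for every $(u_1,\dots,u_r)$. Combined with the reductions of the first paragraph, this establishes the Golay condition for all $(u,u_1,\dots,u_r)\ne(0,\dots,0)$. I expect the only real obstacle to be the bookkeeping: one must track the conjugation carefully in each cross-correlation, since it is precisely the interplay between the chosen phase pattern $(1,\om,\om^2)$ on the rows and the conjugation inside $C_{\mcal{X},\mcal{Y}}$ that produces the cancelling weights $1+\om+\om^2$.
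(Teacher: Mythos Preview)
Your proposal is correct and follows essentially the same approach as the paper: expand each of $C_{\mcal{U}}$, $C_{\mcal{V}}$, $C_{\mcal{W}}$ via \eqref{eqn:DABC}, use the scaling behaviour $C_{\alpha\mcal{X},\beta\mcal{Y}}=\alpha\conj{\beta}\,C_{\mcal{X},\mcal{Y}}$ to see that the $u=1$ and $u=2$ contributions cancel by $1+\om+\om^2=0$, use the Golay hypothesis only at $u=0$, and invoke \eqref{eqn:conj} for negative~$u$. The only difference is presentational: you make the scaling rule explicit and note that $\mcal{U},\mcal{V},\mcal{W}$ are $3$-phase, while the paper writes out the cross-correlation sums directly.
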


\begin{proof}
For all integers $u_1, \dots, u_r$, by \eqref{eqn:DABC} we have
\begin{align*}
\lefteqn{\big( C_{\mcal{U}} + C_{\mcal{V}} + C_{\mcal{W}} \big) (1,u_1,\dots,u_r)} \hspace{35mm} \\
&=            \big( C_{\mcal{A},\,\mcal{B}} + C_{\mcal{B},\,\mcal{C}} \big) (u_1,\dots,u_r) + 
              \big( C_{\mcal{A},\,\om \mcal{B}} + C_{\om \mcal{B},\,\om^2 \mcal{C}} \big) (u_1,\dots,u_r) + \\
&\phantom{==} \big( C_{\mcal{A},\,\om^2 \mcal{B}} + C_{\om^2 \mcal{B},\,\om \mcal{C}} \big) (u_1,\dots,u_r) \\
&=            (1+\om^2+\om)C_{\mcal{A},\,\mcal{B}}(u_1,\dots,u_r) + (1+\om^2+\om)C_{\mcal{B},\,\mcal{C}}(u_1,\dots,u_r) \\
&= 0
\end{align*}
and
\begin{align*}
\big( C_{\mcal{U}} + C_{\mcal{V}} + C_{\mcal{W}} \big) (2,u_1,\dots,u_r)
&= \big( C_{\mcal{A},\,\mcal{C}} + C_{\mcal{A},\,\om^2 \mcal{C}} + C_{\mcal{A},\,\om \mcal{C}} \big) (u_1,\dots,u_r) \\
&= (1+\om+\om^2)C_{\mcal{A},\,\mcal{C}}(u_1,\dots,u_r) \\
&= 0.
\end{align*}
Combining these results, we see that the condition
\[
\big( C_{\mcal{U}} + C_{\mcal{V}} + C_{\mcal{W}} \big) (u,u_1,\dots,u_r) = 0 \quad \mbox{for all integers $u, u_1, \dots, u_r$}
\]
holds for $u \in \{1,2\}$, and therefore by \eqref{eqn:conj} it also holds for $u \in \{-1,-2\}$. It remains to consider the case $u=0$.

For all integers $u_1, \dots, u_r$ for which $(u_1, \dots, u_r) \ne (0,\dots,0)$, by \eqref{eqn:DABC} we have
\begin{align*}
\lefteqn{\big( C_{\mcal{U}} + C_{\mcal{V}} + C_{\mcal{W}} \big) (0,u_1,\dots,u_r)} \hspace{30mm} \\
&=            \big( C_{\mcal{A}} + C_{\mcal{B}} + C_{\mcal{C}} \big) (u_1,\dots,u_r) +
              \big( C_{\mcal{A}} + C_{\om \mcal{B}} + C_{\om^2 \mcal{C}} \big) (u_1,\dots,u_r) + \\
&\phantom{==} \big( C_{\mcal{A}} + C_{\om^2 \mcal{B}} + C_{\om \mcal{C}} \big) (u_1,\dots,u_r) \\
&= 3\big( C_{\mcal{A}} + C_{\mcal{B}} + C_{\mcal{C}} \big) (u_1,\dots,u_r) \\
&= 0
\end{align*}
because $\{\mcal{A},\mcal{B},\mcal{C}\}$ is a Golay array triad.
Therefore $\{U, V, W\}$ is a $3$-phase $3 \times s_1 \times \dots \times s_r$ Golay array triad.
\end{proof}

\cref{thm:3x} is a simplification and reinterpretation of a construction given by Frank \cite[p.~644]{frank-polyphase} for producing a length~$3s$ Golay sequence triad over $\Z_3$ from a length~$s$ Golay sequence triad over~$\Z_3$.
We can readily recover Frank's result by applying \cref{thm:3x} to a length~$s$ Golay sequence triad, followed by projection of the resulting $3 \times s$ Golay array triad to a length $3s$ Golay sequence triad using \cref{thm:proj}. This example illustrates our contention that a higher-dimensional Golay array triad can be considered a more fundamental object than its lower-dimensional projections. Indeed, we now explain many of the entries of Tables~\ref{tab:sequence-counts} and~\ref{tab:array-counts} by systematically applying \cref{thm:3x} (which introduces exactly one new dimension) in conjunction with \cref{thm:proj} (which removes exactly one dimension), starting from a small set of seed Golay sequence and array triads.

Before applying \cref{thm:proj} to a set of equivalence class representatives, we replace each representative by its full equivalence class. We then apply the projection mappings $\psi_{k,\ell}$ and $\psi_{\ell,k}$ for all distinct $\{k,\ell\}$ to each element of the class, retaining only the representative of each resulting equivalence class of Golay sequence or array triads. 
Before applying \cref{thm:3x} to a set of equivalence class representatives, we likewise replace each representative by its full equivalence class, but then also remove the assumption of normalised form and take all $3!$ orderings of the triad sequences (thereby multiplying each class size by a factor of $3^3 \cdot 3!$). We then apply the construction of \cref{thm:3x} to each element of the expanded class, retaining only the representative of each resulting equivalence class of Golay array triads.

\cref{fig:constructions} displays the result of applying these two constructions in conjunction. The seed Golay sequence triads are the single equivalence class of length 1 and of length 2 (both of which are trivial), all equivalence classes of length 5, 7 and 8, and one of the 10 equivalence classes of length~6. The seed Golay array triads are nine of the 99 equivalence classes of size $2 \times 9$.
\cref{fig:constructions} also shows the result of applying \cref{thm:cross-cor} below, which uses length~$s$ Golay sequence triads having special cross-correlation properties to construct $2 \times s$ and $3 \times s$ Golay array triads.

\begin{figure}[H]
\vspace{-5mm}
\hspace{-4mm}
\includegraphics[scale=0.74]{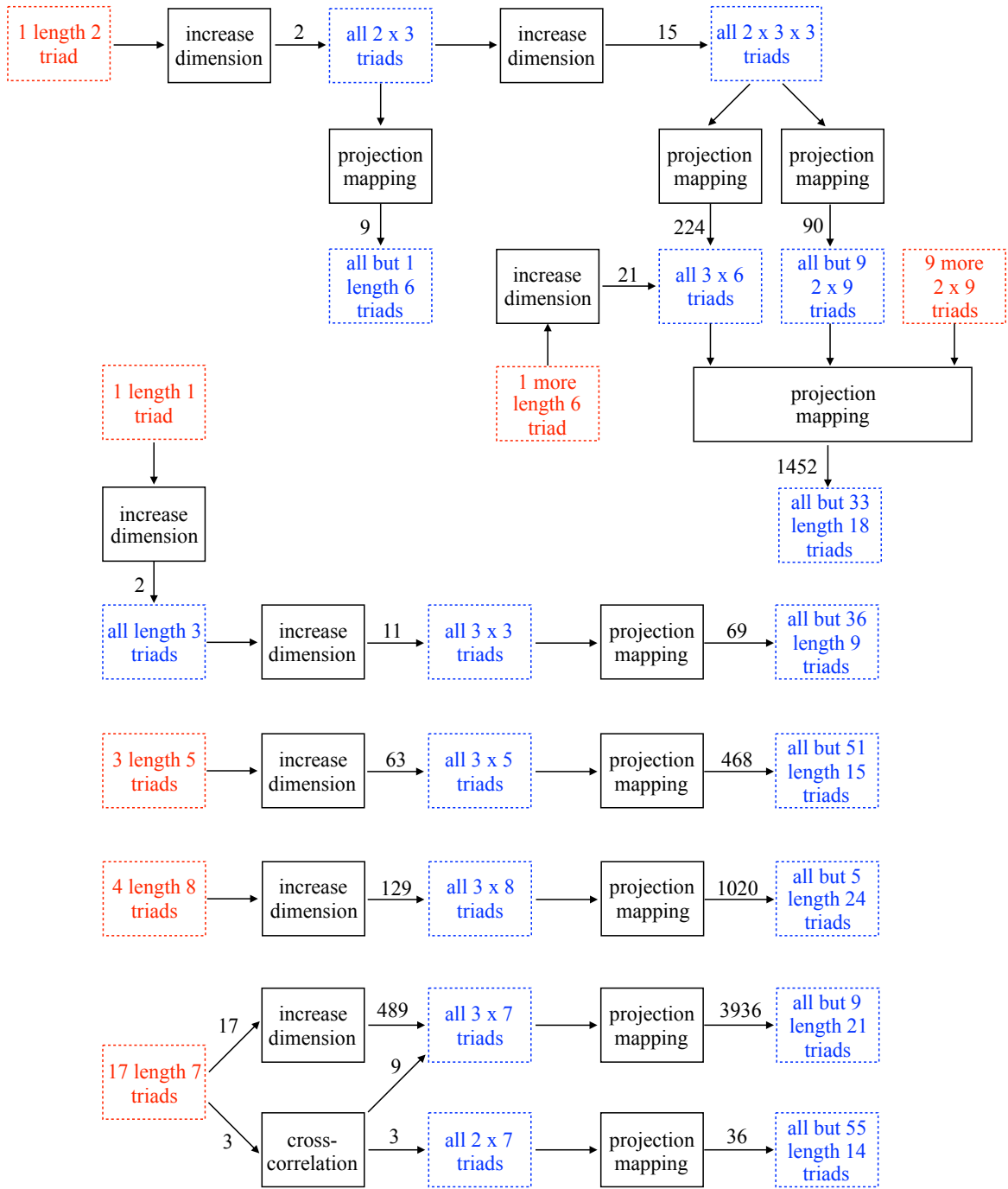}
\vspace{-27mm}
  \caption{Equivalence classes of 3-phase Golay triads whose existence is explained by the constructions of Theorems
\ref{thm:3x} (increase dimension), 
\ref{thm:proj} (projection mapping), and
\ref{thm:cross-cor} (cross-correlation).
Input seed triads are in red, constructed triads are in blue.}
  \label{fig:constructions}
\end{figure}

\begin{theorem}[Cross-correlation]
\label{thm:cross-cor}
Suppose that both
$\{\mcal{A}_1$, $\mcal{B}_1$, $\mcal{C}_1\}$ and
$\{\mcal{A}_3$, $\mcal{B}_3$, $\mcal{C}_3\}$
are $3$-phase length $s$ Golay sequence triads and that, for all integers~$u$, 
\[
\big( C_{\mcal{A}_1,\,\mcal{A}_3} + C_{\mcal{B}_1,\,\mcal{B}_3} + C_{\mcal{C}_1,\,\mcal{C}_3} \big) (u) = 0.
\]
Then 
$\left \{ 
\begin{bmatrix} \mcal{A}_1 \\ \mcal{A}_3 \end{bmatrix},
\begin{bmatrix} \mcal{B}_1 \\ \mcal{B}_3 \end{bmatrix},
\begin{bmatrix} \mcal{C}_1 \\ \mcal{C}_3 \end{bmatrix}
\right \}$
is a $3$-phase $2 \times s$ Golay array triad.

Suppose that $\{\mcal{A}_2$, $\mcal{B}_2$, $\mcal{C}_2\}$ 
is also a $3$-phase length $s$ Golay sequence triad and that, for all integers~$u$, 
\[
\big( C_{\mcal{A}_1,\,\mcal{A}_2} + C_{\mcal{A}_2,\,\mcal{A}_3} +
C_{\mcal{B}_1,\,\mcal{B}_2} + C_{\mcal{B}_2,\,\mcal{B}_3} +
C_{\mcal{C}_1,\,\mcal{C}_2} + C_{\mcal{C}_2,\,\mcal{C}_3} \big) (u) = 0.
\]
Then
$\Bigg \{
\begin{bmatrix} \mcal{A}_1 \\ \mcal{A}_2 \\ \mcal{A}_3 \end{bmatrix},
\begin{bmatrix} \mcal{B}_1 \\ \mcal{B}_2 \\ \mcal{B}_3 \end{bmatrix},
\begin{bmatrix} \mcal{C}_1 \\ \mcal{C}_2 \\ \mcal{C}_3 \end{bmatrix}
\Bigg \}$
is a $3$-phase \mbox{$3 \times s$} Golay array triad.
\end{theorem}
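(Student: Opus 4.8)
The plan is to reduce both statements to the correlation decomposition~\eqref{eqn:DABC} for stacked arrays, and then to verify the Golay array condition one value of the ``stacking shift'' at a time. Here each constituent object $\mcal{A}_1, \mcal{A}_2, \dots$ is a length~$s$ sequence, so the stacked arrays are genuinely $2 \times s$ and $3 \times s$ arrays, and the first coordinate of the autocorrelation argument is precisely the shift along the new (stacking) dimension.

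For the first ($2$-row) statement, I would begin by recording the two-row analogue of~\eqref{eqn:DABC}: for a $2 \times s$ array $\begin{bmatrix} \mcal{X} \\ \mcal{Y} \end{bmatrix}$, the definition of aperiodic autocorrelation gives $C(0,u) = (C_{\mcal{X}} + C_{\mcal{Y}})(u)$ and $C(1,u) = C_{\mcal{X},\,\mcal{Y}}(u)$. Writing $\mcal{P}, \mcal{Q}, \mcal{R}$ for the three stacked arrays, the condition at stacking shift~$0$ and $u \ne 0$ then regroups as $(C_{\mcal{A}_1} + C_{\mcal{B}_1} + C_{\mcal{C}_1})(u) + (C_{\mcal{A}_3} + C_{\mcal{B}_3} + C_{\mcal{C}_3})(u)$, which vanishes because each of the two input sets is a Golay sequence triad; the condition at stacking shift~$1$ is exactly the hypothesised cross-correlation sum $(C_{\mcal{A}_1,\,\mcal{A}_3} + C_{\mcal{B}_1,\,\mcal{B}_3} + C_{\mcal{C}_1,\,\mcal{C}_3})(u) = 0$; and stacking shift~$-1$ follows from~\eqref{eqn:conj}. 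This exhausts all nonzero argument vectors, proving the first claim.

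For the second ($3$-row) statement, I would apply~\eqref{eqn:DABC} directly to each of $\mcal{P} = \begin{bmatrix} \mcal{A}_1 \\ \mcal{A}_2 \\ \mcal{A}_3 \end{bmatrix}$, $\mcal{Q}$, $\mcal{R}$, identifying the three rows with the subscripts $1, 2, 3$. The stacking shift~$0$ term again regroups into the three sequence-triad Golay conditions and vanishes; the stacking shift~$1$ term collects into $(C_{\mcal{A}_1,\,\mcal{A}_2} + C_{\mcal{A}_2,\,\mcal{A}_3})(u)$ plus its $\mcal{B}$ and $\mcal{C}$ analogues, which is precisely the second hypothesis; and the stacking shift~$2$ term collects into $(C_{\mcal{A}_1,\,\mcal{A}_3} + C_{\mcal{B}_1,\,\mcal{B}_3} + C_{\mcal{C}_1,\,\mcal{C}_3})(u)$, which is the \emph{first} hypothesis reused. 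Shifts $-1$ and $-2$ then follow from~\eqref{eqn:conj}.

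The calculation is essentially bookkeeping, so the only point needing care is matching each stacking shift to the correct hypothesis: the key observation is that in the $3$-row case the shift-$2$ term reproduces the top-to-bottom cross-correlation sum appearing at shift~$1$ in the $2$-row case, so the first hypothesis does double duty and both hypotheses are genuinely required. The one small gap to fill is that~\eqref{eqn:DABC} is stated only for $3$-row stacking, so the $2$-row decomposition used in the first part must be derived separately, though it is immediate from the definition of aperiodic autocorrelation.
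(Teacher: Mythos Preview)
Your proposal is correct and follows essentially the same approach as the paper: decompose the stacked-array autocorrelation via~\eqref{eqn:DABC}, handle the zero stacking shift by regrouping into the given Golay triad conditions, match the nonzero stacking shifts to the cross-correlation hypotheses, and cover negative shifts with~\eqref{eqn:conj}. The paper actually writes out only the $3 \times s$ case and declares the $2 \times s$ case ``similar'', so your explicit treatment of the $2$-row decomposition (and the remark that~\eqref{eqn:DABC} is stated only for three rows) is, if anything, slightly more thorough than the paper's own proof.
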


\begin{proof}
We give the proof for the $3 \times s$ array triad; the proof for the $2 \times s$ array triad is similar.
Let 
$
\mcal{U} = \begin{bmatrix} \mcal{A}_1 \\ \mcal{A}_2 \\ \mcal{A}_3 \end{bmatrix},
\mcal{V} = \begin{bmatrix} \mcal{B}_1 \\ \mcal{B}_2 \\ \mcal{B}_3 \end{bmatrix},
\mcal{W} = \begin{bmatrix} \mcal{C}_1 \\ \mcal{C}_2 \\ \mcal{C}_3 \end{bmatrix}.
$
For all integers~$u \ne 0$, by \eqref{eqn:DABC} we have
\begin{align*}
\lefteqn{\big( C_{\mcal{U}} + C_{\mcal{V}} + C_{\mcal{W}} \big) (0,u)} \hspace{15mm} \\
 &= \big( C_{\mcal{A}_1} + C_{\mcal{A}_2} + C_{\mcal{A}_3} \big) (u) + 
    \big( C_{\mcal{B}_1} + C_{\mcal{B}_2} + C_{\mcal{B}_3} \big) (u) + 
    \big( C_{\mcal{C}_1} + C_{\mcal{C}_2} + C_{\mcal{C}_3} \big) (u) = 0
\end{align*}
because each of
$\{\mcal{A}_1$, $\mcal{B}_1$, $\mcal{C}_1\}$,
$\{\mcal{A}_2$, $\mcal{B}_2$, $\mcal{C}_2\}$,
$\{\mcal{A}_3$, $\mcal{B}_3$, $\mcal{C}_3\}$ 
is a Golay sequence triad.
Furthermore, for all integers $u$ we have
\begin{align*}
\lefteqn{\big( C_{\mcal{U}} + C_{\mcal{V}} + C_{\mcal{W}} \big) (1,u)} \hspace{20mm} \\
 &= \big( C_{\mcal{A}_1,\,\mcal{A}_2} + C_{\mcal{A}_2,\,\mcal{A}_3} \big) (u) + 
    \big( C_{\mcal{B}_1,\,\mcal{B}_2} + C_{\mcal{B}_2,\,\mcal{B}_3} \big) (u) +
    \big( C_{\mcal{C}_1,\,\mcal{C}_2} + C_{\mcal{C}_2,\,\mcal{C}_3} \big) (u) = 0
\end{align*}
by assumption, and
\[
\big( C_{\mcal{U}} + C_{\mcal{V}} + C_{\mcal{W}} \big) (2,u)
 = C_{\mcal{A}_1,\,\mcal{A}_3}(u) + C_{\mcal{B}_1,\,\mcal{B}_3}(u) + C_{\mcal{C}_1,\,\mcal{C}_3}(u) = 0
\]
by assumption. It follows from \eqref{eqn:conj} that 
$\{\mcal{U}, \mcal{V}, \mcal{W}\}$ is a $3$-phase \mbox{$3 \times s$} Golay array triad.
\end{proof}

To construct $3 \times 7$ and $2 \times 7$ Golay array triads using \cref{thm:cross-cor}, note that the three (inequivalent) 3-phase length~7 Golay sequence triads 
\begin{align*}
\mcal{T}_1 &= \{\mcal{U}_1, \mcal{V}_1, \mcal{W}_1 \} =
\left \{ 
\left [ \begin{array}{*{7}{c @{\hspace{1.6mm}} }} 1 & 1 & \om   & 1     & \om^2 & \om^2 & 1     \end{array} \right ],
\left [ \begin{array}{*{7}{c @{\hspace{1.6mm}} }} 1 & 1 & 1     & \om^2 & 1     & \om   & \om   \end{array} \right ],
\left [ \begin{array}{*{7}{c @{\hspace{1.6mm}} }} 1 & 1 & \om^2 & 1     & \om   & 1     & \om^2 \end{array} \right ] 
\right \},\\[1ex]
\mcal{T}_2 &= \{\mcal{U}_2, \mcal{V}_2, \mcal{W}_2 \} = 
\left \{ 
\left [ \begin{array}{*{7}{c @{\hspace{1.6mm}} }} 1 & \om & \om   & \om   & 1     & \om   & \om^2 \end{array} \right ],
\left [ \begin{array}{*{7}{c @{\hspace{1.6mm}} }} 1 & \om & 1     & \om^2 & \om   & 1     & 1     \end{array} \right ],
\left [ \begin{array}{*{7}{c @{\hspace{1.6mm}} }} 1 & \om & \om^2 & \om^2 & \om^2 & \om^2 & \om   \end{array} \right ] 
\right \},\\[1ex]
\mcal{T}_3 &= \{\mcal{U}_3, \mcal{V}_3, \mcal{W}_3 \} =
\left \{ 
\left [ \begin{array}{*{7}{c @{\hspace{1.6mm}} }} 1 & \om^2 & \om   & \om & \om   & 1     & \om   \end{array} \right ],
\left [ \begin{array}{*{7}{c @{\hspace{1.6mm}} }} 1 & \om^2 & 1     & \om & \om^2 & \om^2 & \om^2 \end{array} \right ],
\left [ \begin{array}{*{7}{c @{\hspace{1.6mm}} }} 1 & \om^2 & \om^2 & 1   & 1     & \om   & 1     \end{array} \right ] 
\right \}
\end{align*}
satisfy, for all integers~$u$,
\begin{align*}
\big( C_{\mcal{U}_1,\,\mcal{U}_2} + C_{\mcal{V}_1,\,\mcal{V}_2} + C_{\mcal{W}_1,\,\mcal{W}_2} \big) (u) & = 0, \\
\big( C_{\mcal{U}_2,\,\mcal{U}_3} + C_{\mcal{V}_2,\,\mcal{V}_3} + C_{\mcal{W}_2,\,\mcal{W}_3} \big) (u) & = 0, \\
\big( C_{\mcal{U}_3,\,\mcal{U}_1} + C_{\mcal{V}_3,\,\mcal{V}_1} + C_{\mcal{W}_3,\,\mcal{W}_1} \big) (u) & = 0.
\end{align*}
We may therefore apply \cref{thm:cross-cor} to construct three 3-phase $2 \times 7$ Golay array triads by taking 
$\big( \{\mcal{A}_1,\mcal{B}_1,\mcal{C}_1\}, \{\mcal{A}_3,\mcal{B}_3,\mcal{C}_3\} \big)$
to be 
$(\mcal{T}_1,\mcal{T}_2)$ or 
$(\mcal{T}_2,\mcal{T}_3)$ or 
$(\mcal{T}_3,\mcal{T}_1)$, and these three constructed triads are inequivalent.
Using the identity 
\[
C_{\mcal{B},\mcal{A}}(u) = \overline{C_{\mcal{A},\mcal{B}}(-u)} \quad \mbox{for all $u$}
\]
for complex-valued sequences $\mcal{A}, \mcal{B}$ of equal length, 
we may also apply \cref{thm:cross-cor} to construct nine 3-phase $3 \times 7$ Golay array triads by taking
$\big( \{\mcal{A}_1,\mcal{B}_1,\mcal{C}_1\}, \{\mcal{A}_2,\mcal{B}_2,\mcal{C}_2\}, \{\mcal{A}_3,\mcal{B}_3,\mcal{C}_3\} \big)$
to be 
$(\om^e \mcal{T}_1, \mcal{T}_2, \mcal{T}_3 )$ or
$(\mcal{T}_2, \mcal{T}_3, \om^e \mcal{T}_1 )$ or
$(\mcal{T}_3, \om^e \mcal{T}_1, \mcal{T}_2 )$
for each $e \in \Z_3$,
and these nine constructed triads are inequivalent.

We see from \cref{fig:constructions} that, starting from a small set of seed Golay sequence and array triads, we can use Theorems~\ref{thm:proj}, \ref{thm:3x}, and \ref{thm:cross-cor} to explain the existence of all $2 \times 3$,\, $3 \times 3$,\, $2 \times 7$,\, $3 \times 5$,\, $3 \times 6$,\, $2 \times 3 \times 3$, $3 \times 7$, and $3 \times 8$ Golay array triads,
and all but nine of the 99 equivalence classes of $2 \times 9$ Golay array triads. We can also explain a large proportion of the equivalence classes of length 3, 6, 9, 14, 15, 18, 21, and 24 Golay sequence triads, including all but nine of the 3945 equivalence classes of length~21 and all but five of the 1025 equivalence classes of length~24. 
The counts of equivalence classes recorded in Tables~\ref{tab:sequence-counts} and~\ref{tab:array-counts} that are not explained by these constructions are summarised in \cref{tab:unexplained}. 
The representative of some of these unexplained equivalence classes is listed in the Appendix.

\section{Open Questions}
\label{sec:open}
In this section, we pose some open questions motivated by our results.
The counts shown in \cref{tab:sequence-counts}, and the nonexistence result of \cref{thm:4mod6}, suggest a natural question:

\begin{enumerate}
\item[Q1.]
Does there exist a $3$-phase length~$s$ Golay sequence triad if and only if $s \not \equiv 4 \pmod{6}$?
\end{enumerate}
The counts shown in \cref{tab:array-counts} suggest a possible generalisation of \cref{thm:4mod6} to each of the dimensions of a Golay array triad:
\begin{enumerate}
\item[Q2.]
Does the existence of a 3-phase $s_1 \times \dots \times s_r$ Golay array triad imply that $s_k \not \equiv 4 \pmod{6}$ for each $k$?
\end{enumerate}
The repetition of some of the counts shown in \cref{tab:array-counts} suggests a possible connection:
\begin{enumerate}
\item[Q3.]
Is there a convincing explanation for the total count of normalised array triads and of Golay arrays being identical for 3-phase Golay triads of sizes $3 \times 5$ and $2 \times 3 \times 3$?
\end{enumerate}
The unexplained equivalence classes counted in \cref{tab:unexplained} prompt the question:
\begin{enumerate}
\item[Q4.]
Can new constructions be found to account for the equivalence classes of Golay sequence and array triads marked as ``some'' or ``none'' in \cref{tab:unexplained}?
\end{enumerate}

\begin{table}[H]
\begin{center}
\vspace{2mm}
\begin{tabular}{|r|r|r|r|r|r|r|c|}
\hline
Sequence 		& Total	\#	& \multicolumn{5}{c|}{\# unexplained equivalence classes}   
										& none/some/all	\\ \cline{3-7}
or array     		& equivalence 	& size & size & size & size & Total  	& explained, or	\\ 
size			& classes	& 1    & 24   & 48   & 288  &       	& seeds		\\ \hline

$2$			& 1		& 1    &      &      &      & 1 	& trivial seed 	\\ 
$5$			& 3		&      &  3   &      &      & 3 	& seeds 	\\
$7$			& 17		&      &  8   & 9    &      & 17 	& seeds 	\\ 
$8$			& 4		&      & 1    & 3    &      & 4 	& seeds 	\\ \hline

$11$			& 64		&      & 14   & 50   &      & 64 	& none 		\\
$12$			& 7		&      &      & 7    &      & 7 	& none 		\\
$13$			& 64		&      & 16   & 48   &      & 64 	& none 		\\
$17$			& 25		&      & 10   & 15   &      & 25 	& none 		\\
$19$			& 17		&      & 6    & 11   &      & 17 	& none 		\\
$20$			& 10		&      &      & 10   &      & 10 	& none 		\\ 
$23$			& 2		&      & 2    &      &      & 2 	& none 		\\ \hline

$3$			& 2		&      &      &      &      & 0 	& all 		\\
$6$			& 10		&      & 1    &      &      & 1 	& some (*)	\\
$9$			& 105		&      & 10   & 26   &      & 36 	& some 		\\
$14$			& 91		&      & 23   & 32   &      & 55 	& some		\\
$15$			& 519		&      & 15   & 36   &      & 51 	& some		\\
$18$			& 1485		&      & 28   & 5    &      & 33 	& some		\\
$21$			& 3945		&      & 4    & 5    &      & 9 	& some		\\ 
$24$			& 1025		&      & 4    & 1    &      & 5 	& some		\\ \hline

$2 \times 3$		& 2		&      &      &      &      & 0 	& all 		\\
$3 \times 3$		& 11		&      &      &      &      & 0 	& all 		\\
$2 \times 7$		& 3		&      &      &      &      & 0     	& all 		\\
$3 \times 5$		& 63		&      &      &      &      & 0		& all 		\\
$3 \times 6$		& 245		&      &      &      &      & 0      	& all 		\\
$2 \times 9$ 		& 99		&      &      &      & 9    & 9		& some (*)	\\
$2 \times 3 \times 3$	& 15		&      &      &      &      & 0		& all 		\\
$3 \times 7$		& 498		&      &      &      &      & 0   	& all 		\\ 
$3 \times 8$		& 129		&      &      &      &      & 0   	& all 		\\ \hline
\end{tabular} 
\end{center}
\caption{Counts of 3-phase Golay sequence and array triads whose existence is not explained by the constructions of Theorems~\ref{thm:proj}, \ref{thm:3x}, \ref{thm:cross-cor}. (*) indicates that the unexplained equivalence class(es) are used as seeds for other sizes.}
\label{tab:unexplained}
\end{table}

\section*{Appendix: Unexplained Golay sequence and array triads}
In this Appendix, we give the representative of those equivalence classes of Golay triads that are not explained by the constructions of \cref{sec:constructions}, for lengths 6, 21, and 24, and for size $2 \times 9$.

The single unexplained equivalence class of length~6 Golay sequence triads over~$\Z_3$ has representative
\[
\big \{
\left [
\begin{array}{*{6}{c @{\hspace{1.7mm}} }}
0 & 0 & 0 & 1 & 1 & 0 
\end{array}
\right ],
\left [
\begin{array}{*{6}{c @{\hspace{1.7mm}} }}
0 & 2 & 0 & 2 & 2 & 1 
\end{array}
\right ],
\left [
\begin{array}{*{6}{c @{\hspace{1.7mm}} }}
0 & 1 & 2 & 2 & 0 & 2 
\end{array}
\right ]
\big \}.
\]

The nine unexplained equivalence classes of length~21 Golay sequence triads over~$\Z_3$ have representatives
\begin{align*}
& \big \{
\left [ 0 0 0 0 1 2 0 1 2 2 2 1 0 2 1 2 1 2 2 1 0 \right ],\,
\left [ 0 1 1 2 0 0 1 2 2 1 2 1 1 1 2 1 2 0 2 2 1 \right ],\,
\left [ 0 0 0 0 2 1 1 0 1 0 2 1 1 0 2 0 0 0 0 1 2 \right ]
\big \}, \\[0.5ex]
& \big \{
\left [0 0 0 0 1 2 2 1 0 1 2 0 0 1 0 1 0 0 2 1 0 \right ],\,
\left [0 0 0 0 1 2 2 1 0 0 0 1 2 0 1 2 1 2 1 2 1 \right ],\, 
\left [0 0 0 0 1 2 2 1 0 2 2 2 1 2 2 0 2 1 0 0 2 \right ] 
\big \}, \\[0.5ex]
& \big \{
\left [0 0 0 1 2 2 2 1 2 2 1 0 0 0 0 1 1 0 0 1 0 \right ],\, 
\left [0 0 0 1 2 1 0 0 2 0 1 0 1 1 2 0 2 1 2 2 1 \right ],\, 
\left [0 0 0 1 2 0 0 2 0 1 1 0 0 2 0 2 0 2 1 0 2 \right ] 
\big \}, \\[0.5ex]
& \big \{
\left [0 0 1 1 0 1 0 2 1 2 1 2 1 2 0 1 0 1 1 0 0 \right ],\, 
\left [0 0 1 1 0 0 0 2 0 0 1 1 2 1 1 0 1 0 2 2 1 \right ],\, 
\left [0 0 2 2 0 0 0 1 0 0 2 2 1 2 2 0 2 0 1 1 2 \right ] 
\big \}, \\[0.5ex]
& \big \{
\left [0 0 1 1 0 2 2 0 2 0 2 0 2 1 2 2 0 1 1 0 0 \right ],\, 
\left [0 0 1 1 2 2 2 1 0 0 2 0 2 0 1 2 1 1 2 1 1 \right ],\, 
\left [0 0 1 0 0 1 0 0 1 0 1 0 2 0 1 1 1 0 0 2 2 \right ] 
\big \}, \\[0.5ex]
& \big \{
\left [0 0 1 1 2 0 0 1 2 2 1 0 2 0 2 1 0 0 2 1 0 \right ],\, 
\left [0 0 0 0 2 2 2 2 0 0 0 0 2 0 1 0 1 2 1 2 1 \right ],\, 
\left [0 0 2 1 2 2 1 2 0 1 0 0 1 2 0 0 2 0 0 0 2 \right ] 
\big \}, \\[0.5ex]
& \big \{
\left [0 1 1 1 1 0 0 1 2 2 2 0 1 2 1 1 2 2 2 1 0 \right ],\, 
\left [0 0 0 2 1 2 1 1 2 0 1 0 2 0 1 0 0 2 1 0 1 \right ],\, 
\left [0 0 0 1 2 1 0 2 2 0 2 2 0 2 2 2 2 0 1 0 2 \right ] 
\big \}, \\[0.5ex]
& \big \{
\left [0 1 1 1 2 1 1 2 1 2 0 1 0 0 2 2 2 1 2 1 0 \right ],\, 
\left [0 0 0 2 2 0 1 1 2 2 2 2 1 0 2 2 1 0 2 0 1 \right ],\, 
\left [0 0 1 1 0 2 0 0 0 0 0 1 2 0 1 2 1 0 1 0 2 \right ] 
\big \}, \\[0.5ex]
& \big \{
\left [0 1 1 1 2 2 0 2 2 1 2 2 1 1 1 0 2 1 2 1 0 \right ],\, 
\left [0 0 0 1 1 1 1 0 2 2 1 2 2 1 1 2 1 2 1 0 1 \right ],\, 
\left [0 0 1 1 0 0 1 2 0 0 1 2 1 2 0 1 0 0 2 0 2 \right ]
\big \}.
\end{align*}

The five unexplained equivalence classes of length~24 Golay sequence triads over~$\Z_3$ have representatives
\begin{align*}
& \big \{
\left [ 0 0 0 0 1 0 1 0 0 1 1 2 0 1 2 0 0 2 1 1 2 1 0 0 \right ],\,
\left [ 0 0 1 2 2 2 0 1 0 1 0 1 1 1 1 1 0 2 1 0 0 2 2 1 \right ],\,
\left [ 0 0 1 1 0 0 2 2 0 1 1 0 2 0 0 2 0 2 0 2 1 2 1 2 \right ]
\big \}, \\[0.5ex]
& \big \{
\left [ 0 0 0 0 1 1 1 1 0 1 0 1 2 0 0 2 2 0 1 2 2 2 1 0 \right ],\,
\left [ 0 0 0 2 1 2 0 1 0 0 2 0 1 2 2 0 0 2 1 0 1 0 2 1 \right ],\,
\left [ 0 0 0 1 1 2 1 1 0 2 1 2 0 1 1 1 1 0 0 1 0 1 0 2 \right ]
\big \}, \\[0.5ex]
& \big \{
\left [ 0 0 0 1 0 0 0 1 1 0 1 2 2 1 2 1 1 0 2 0 1 2 1 0 \right ],\,
\left [ 0 0 0 1 2 0 0 0 2 2 0 0 2 0 1 1 0 1 0 2 2 0 2 1 \right ],\,
\left [ 0 0 0 2 2 2 2 0 1 1 2 1 2 2 0 2 0 1 0 2 1 1 0 2 \right ]
\big \}, \\[0.5ex]
& \big \{
\left [ 0 1 2 0 2 2 2 0 2 0 0 0 2 1 2 2 2 1 0 1 1 2 1 0 \right ],\,
\left [ 0 0 0 1 2 1 1 0 1 2 2 1 0 2 0 0 0 2 0 2 2 2 0 1 \right ],\,
\left [ 0 0 0 0 2 0 1 1 0 0 1 2 1 0 0 1 2 1 0 0 1 1 0 2 \right ]
\big \}, \\[0.5ex]
& \big \{
\left [ 0 1 2 1 2 2 0 1 0 2 1 0 0 1 2 0 1 0 2 2 1 2 1 0 \right ],\,
\left [ 0 0 0 2 0 0 0 2 2 1 1 2 0 0 2 1 2 2 2 2 0 2 0 1 \right ],\,
\left [ 0 0 0 1 0 0 0 1 1 2 2 1 0 0 1 2 1 1 1 1 0 1 0 2 \right ]
\big \}.
\end{align*}

The nine unexplained equivalence classes of $2 \times 9$ Golay array triads over~$\Z_3$ have representatives
\begin{align*}
& \bigg \{
\left [
\begin{array}{*{9}{c @{\hspace{1.7mm}} }}
0 & 0 & 0 & 0 & 0 & 0 & 0 & 1 & 2 \\
2 & 1 & 0 & 2 & 0 & 1 & 2 & 1 & 0
\end{array}
\right ],
\left [
\begin{array}{*{9}{c @{\hspace{1.7mm}} }}
0 & 2 & 0 & 0 & 1 & 1 & 2 & 1 & 2 \\ 
1 & 2 & 2 & 1 & 1 & 0 & 2 & 2 & 1
\end{array}
\right ],
\left [
\begin{array}{*{9}{c @{\hspace{1.7mm}} }}
0 & 1 & 1 & 2 & 0 & 0 & 1 & 0 & 1 \\
2 & 2 & 1 & 1 & 0 & 1 & 0 & 0 & 2 
\end{array}
\right ]
\bigg \}, \\[0.5ex]
& \bigg \{
\left [
\begin{array}{*{9}{c @{\hspace{1.7mm}} }}
0 & 0 & 0 & 0 & 0 & 1 & 2 & 2 & 2 \\
2 & 1 & 0 & 2 & 0 & 2 & 1 & 2 & 0
\end{array}
\right ],
\left [
\begin{array}{*{9}{c @{\hspace{1.7mm}} }}
0 & 1 & 1 & 2 & 1 & 1 & 2 & 1 & 2 \\
1 & 1 & 0 & 0 & 1 & 0 & 2 & 2 & 1
\end{array}
\right ],
\left [
\begin{array}{*{9}{c @{\hspace{1.7mm}} }}
0 & 1 & 0 & 0 & 2 & 1 & 1 & 0 & 0 \\
1 & 1 & 2 & 1 & 2 & 0 & 1 & 1 & 2
\end{array}
\right ]
\bigg \}, \\[0.5ex]
& \bigg \{
\left [
\begin{array}{*{9}{c @{\hspace{1.7mm}} }}
0 & 0 & 0 & 0 & 1 & 0 & 0 & 0 & 2 \\
2 & 0 & 1 & 2 & 1 & 1 & 0 & 2 & 0
\end{array}
\right ],
\left [
\begin{array}{*{9}{c @{\hspace{1.7mm}} }}
0 & 0 & 2 & 0 & 2 & 1 & 1 & 1 & 2 \\
1 & 2 & 2 & 1 & 2 & 0 & 2 & 1 & 1
\end{array}
\right ],
\left [
\begin{array}{*{9}{c @{\hspace{1.7mm}} }}
0 & 0 & 1 & 2 & 0 & 0 & 1 & 1 & 1 \\
2 & 0 & 2 & 1 & 0 & 1 & 1 & 0 & 2
\end{array}
\right ]
\bigg \}, \\[0.5ex]
& \bigg \{
\left [
\begin{array}{*{9}{c @{\hspace{1.7mm}} }}
0 & 0 & 0 & 0 & 1 & 0 & 2 & 2 & 2 \\
0 & 1 & 2 & 0 & 0 & 1 & 2 & 1 & 0 
\end{array}
\right ],
\left [
\begin{array}{*{9}{c @{\hspace{1.7mm}} }}
0 & 2 & 0 & 0 & 1 & 0 & 0 & 1 & 1 \\
1 & 1 & 0 & 1 & 0 & 0 & 2 & 2 & 1 
\end{array}
\right ],
\left [
\begin{array}{*{9}{c @{\hspace{1.7mm}} }}
0 & 1 & 1 & 2 & 1 & 0 & 0 & 1 & 1 \\
0 & 2 & 0 & 2 & 0 & 1 & 0 & 0 & 2 
\end{array}
\right ]
\bigg \}, \\[0.5ex]
& \bigg \{
\left [
\begin{array}{*{9}{c @{\hspace{1.7mm}} }}
0 & 0 & 0 & 0 & 1 & 1 & 1 & 1 & 0 \\
1 & 2 & 0 & 1 & 0 & 1 & 0 & 2 & 0 
\end{array}
\right ],
\left [
\begin{array}{*{9}{c @{\hspace{1.7mm}} }}
0 & 0 & 2 & 0 & 0 & 2 & 2 & 2 & 0 \\
0 & 1 & 1 & 0 & 2 & 0 & 2 & 1 & 1 
\end{array}
\right ],
\left [
\begin{array}{*{9}{c @{\hspace{1.7mm}} }}
0 & 0 & 1 & 2 & 0 & 1 & 2 & 2 & 2 \\
1 & 2 & 1 & 0 & 2 & 1 & 1 & 0 & 2 
\end{array}
\right ]
\bigg \}, \\[0.5ex]
& \bigg \{
\left [
\begin{array}{*{9}{c @{\hspace{1.7mm}} }}
0 & 0 & 0 & 0 & 1 & 2 & 0 & 1 & 1 \\
1 & 0 & 2 & 1 & 1 & 1 & 0 & 2 & 0 
\end{array}
\right ],
\left [
\begin{array}{*{9}{c @{\hspace{1.7mm}} }}
0 & 0 & 2 & 0 & 2 & 2 & 0 & 1 & 0 \\
2 & 1 & 2 & 2 & 2 & 0 & 2 & 1 & 1 
\end{array}
\right ],
\left [
\begin{array}{*{9}{c @{\hspace{1.7mm}} }}
0 & 0 & 1 & 2 & 0 & 2 & 1 & 2 & 0 \\
1 & 0 & 0 & 0 & 0 & 1 & 1 & 0 & 2 
\end{array}
\right ]
\bigg \}, \\[0.5ex]
& \bigg \{
\left [
\begin{array}{*{9}{c @{\hspace{1.7mm}} }}
0 & 0 & 0 & 1 & 0 & 0 & 0 & 0 & 2 \\
0 & 2 & 1 & 1 & 2 & 1 & 2 & 0 & 0 
\end{array}
\right ],
\left [
\begin{array}{*{9}{c @{\hspace{1.7mm}} }}
0 & 0 & 2 & 2 & 1 & 0 & 2 & 2 & 0 \\
0 & 2 & 0 & 2 & 0 & 1 & 1 & 2 & 1 
\end{array}
\right ],
\left [
\begin{array}{*{9}{c @{\hspace{1.7mm}} }}
0 & 0 & 1 & 0 & 2 & 0 & 1 & 1 & 1 \\
0 & 2 & 2 & 0 & 1 & 1 & 0 & 1 & 2 
\end{array}
\right ]
\bigg \}, \\[0.5ex]
& \bigg \{
\left [
\begin{array}{*{9}{c @{\hspace{1.7mm}} }}
0 & 0 & 0 & 1 & 0 & 1 & 0 & 2 & 0 \\
1 & 2 & 0 & 2 & 2 & 1 & 2 & 0 & 0 
\end{array}
\right ],
\left [
\begin{array}{*{9}{c @{\hspace{1.7mm}} }}
0 & 0 & 2 & 2 & 1 & 1 & 2 & 1 & 1 \\
1 & 2 & 2 & 0 & 0 & 1 & 1 & 2 & 1 
\end{array}
\right ],
\left [
\begin{array}{*{9}{c @{\hspace{1.7mm}} }}
0 & 0 & 1 & 0 & 2 & 1 & 1 & 0 & 2 \\
1 & 2 & 1 & 1 & 1 & 1 & 0 & 1 & 2 
\end{array}
\right ]
\bigg \}, \\[0.5ex]
& \bigg \{
\left [
\begin{array}{*{9}{c @{\hspace{1.7mm}} }}
0 & 0 & 0 & 1 & 0 & 2 & 1 & 0 & 1 \\
0 & 1 & 2 & 1 & 1 & 1 & 2 & 0 & 0 
\end{array}
\right ],
\left [
\begin{array}{*{9}{c @{\hspace{1.7mm}} }}
0 & 0 & 2 & 2 & 1 & 2 & 0 & 2 & 2 \\
0 & 1 & 1 & 2 & 2 & 1 & 1 & 2 & 1 
\end{array}
\right ],
\left [
\begin{array}{*{9}{c @{\hspace{1.7mm}} }}
0 & 0 & 1 & 0 & 2 & 2 & 2 & 1 & 0 \\
0 & 1 & 0 & 0 & 0 & 1 & 0 & 1 & 2
\end{array}
\right ]
\bigg \}.
\end{align*}

\bibliographystyle{plain}

\begin{thebibliography}{10}

\bibitem{avis-masters}
A.A. Avis.
\newblock 3-phase {G}olay triads.
\newblock Master's thesis, Simon Fraser University, 2008.
\newblock Available online: \url{http://summit.sfu.ca/item/11482}.

\bibitem{three-phase-barker}
J.P.\ Bell, J.\ Jedwab, M.\ Khatirinejad, and K.-U.\ Schmidt.
\newblock Three-phase {B}arker arrays.
\newblock {\em J. Comb. Des.}, {\bf 23}:45--59, 2015.

\bibitem{chong-venkataramani-tarokh}
C.V.\ Chong, R.\ Venkataramani, and V.\ Tarokh.
\newblock A new construction of 16-{QAM} {G}olay complementary sequences.
\newblock {\em IEEE Trans.\ Inform.\ Theory}, {\bf 49}:2953--2959, 2003.

\bibitem{craigen-holzmann-kharaghani}
R.\ Craigen, W.\ Holzmann, and H.\ Kharaghani.
\newblock Complex {G}olay sequences: structure and applications.
\newblock {\em Discrete Math.}, {\bf 252}:73--89, 2002.

\bibitem{golay-reed-muller}
J.A.\ Davis and J.\ Jedwab.
\newblock Peak-to-mean power control in {OFDM}, {G}olay complementary
  sequences, and {R}eed-{M}uller codes.
\newblock {\em IEEE Trans.\ Inform.\ Theory}, {\bf 45}:2397--2417, 1999.

\bibitem{dymond-phd}
M.\ Dymond.
\newblock {\em Barker arrays: existence, generalization and alternatives}.
\newblock PhD thesis, University of London, 1992.

\bibitem{fiedler-jedwab-parker-multidim-golay}
F.\ Fiedler, J.\ Jedwab, and M.G.\ Parker.
\newblock A multi-dimensional approach to the construction and enumeration of
  {G}olay complementary sequences.
\newblock {\em J.\ Combin.\ Theory (A)}, {\bf 115}:753--776, 2008.

\bibitem{fiedler-jedwab-wiebe-6phase}
F.\ Fiedler, J.\ Jedwab, and A.\ Wiebe.
\newblock A new source of seed pairs for {G}olay sequences of length~$2^m$.
\newblock {\em J.\ Combin.\ Theory (A)}, {\bf 117}:589--597, 2010.

\bibitem{frank-polyphase}
R.L.\ Frank.
\newblock Polyphase complementary codes.
\newblock {\em IEEE Trans.\ Inform.\ Theory}, {\bf IT-26}:641--647, 1980.

\bibitem{gibson-jedwab-even}
R.G.\ Gibson and J.\ Jedwab.
\newblock Quaternary {G}olay sequence pairs {I}: Even length.
\newblock {\em Des.\ Codes Cryptogr.}, {\bf 59}:131--146, 2011.

\bibitem{golay-static}
M.J.E.\ Golay.
\newblock Static multislit spectrometry and its application to the panoramic
  display of infrared spectra.
\newblock {\em J.\ Opt.\ Soc.\ Amer.}, {\bf 41}:468--472, 1951.

\bibitem{golay-complementary}
M.J.E.\ Golay.
\newblock Complementary series.
\newblock {\em IRE Trans.\ Inform.\ Theory}, {\bf IT-7}:82--87, 1961.

\bibitem{barker-alternatives}
J.\ Jedwab.
\newblock What can be used instead of a {B}arker sequence?
\newblock {\em Contemp.\ Math.}, {\bf 461}:153--178, 2008.

\bibitem{jedwab-parker-golay-arrays}
J.\ Jedwab and M.G.\ Parker.
\newblock Golay complementary array pairs.
\newblock {\em Des.\ Codes Cryptogr.}, {\bf 44}:209--216, 2007.

\bibitem{nazarathy}
M.\ Nazarathy, S.A.\ Newton, R.P.\ Giffard, D.S.\ Moberly, F.\ Sischka, W.R.
  Trutna, Jr., and S.\ Foster.
\newblock Real-time long range complementary correlation optical time domain
  reflectometer.
\newblock {\em IEEE J.\ Lightwave Technology}, {\bf 7}:24--38, 1989.

\bibitem{nowicki}
A.\ Nowicki, W.\ Secomski, J.\ Litniewski, I.\ Trots, and P.A.\ Lewin.
\newblock On the application of signal compression using {G}olay's codes
  sequences in ultrasonic diagnostic.
\newblock {\em Arch. Acoustics}, {\bf 28}:313--324, 2003.

\bibitem{ohyama-honda-tsujiuchi}
N.\ Ohyama, T.\ Honda, and J.\ Tsujiuchi.
\newblock An advanced coded imaging without side lobes.
\newblock {\em Optics Comm.}, {\bf 27}:339--344, 1978.

\bibitem{paterson}
K.G.\ Paterson.
\newblock Generalized {R}eed-{M}uller codes and power control in {OFDM}
  modulation.
\newblock {\em IEEE Trans.\ Inform.\ Theory}, {\bf 46}:104--120, 2000.

\bibitem{rossing-tarokh}
C.\ R{\"o}{\ss}ing and V.\ Tarokh.
\newblock A construction of {OFDM} 16-{QAM} sequences having low peak power.
\newblock {\em IEEE Trans.\ Inform.\ Theory}, {\bf 47}:2091--2094, 2001.

\bibitem{schmidt-complementary}
K.-U.\ Schmidt.
\newblock Complementary sets, generalized {R}eed-{M}uller codes, and power
  control for {OFDM}.
\newblock {\em IEEE Trans.\ Inform.\ Theory}, {\bf 53}:808--814, 2007.

\bibitem{turyn-hadamard}
R.J.\ Turyn.
\newblock Hadamard matrices, {B}aumert-{H}all units, four-symbol sequences,
  pulse compression, and surface wave encodings.
\newblock {\em J.\ Combin.\ Theory (A)}, {\bf 16}:313--333, 1974.

\end{thebibliography}

\end{document}